\newtheorem{theorem}{Theorem}[section]
\newtheorem{lemma}[theorem]{Lemma}
\newtheorem{corollary}[theorem]{Corollary}
\newtheorem{proposition}[theorem]{Proposition}
\newtheorem{remark}[theorem]{Remark}
\theoremstyle{definition}
\theoremstyle{remark}
\numberwithin{equation}{section}
\begin{document}
\title[Ternary Kloosterman Sums with Square Arguments]{$\begin{array}{c}
         \text{Infinite Families of Recursive Formulas}\\
         \text{Generating Power Moments of Ternary Kloosterman Sums}\\
           \text{ with Square Arguments Associated with $O^{-}_{}(2n,q)$}
       \end{array}$}

\author{dae san kim}
\address{Department of Mathematics, Sogang University, Seoul 121-742, Korea}
\curraddr{Department of Mathematics, Sogang University, Seoul
121-742, Korea} \email{dskim@sogong.ac.kr}
\thanks{}

\subjclass[]{}

\date{}

\dedicatory{ }

\keywords{}

\begin{abstract}
In this paper, we construct eight infinite families of ternary
linear codes  associated with double cosets with respect to certain
maximal parabolic subgroup  of the special orthogonal group
$SO^{-}(2n,q)$. Here ${q}$ is a power of three. Then we obtain four
infinite families of recursive formulas for power moments of
Kloosterman sums with square arguments and four infinite families of
recursive formulas for even power moments of those in terms of the
frequencies of weights in the codes. This is done via Pless power
moment identity and by utilizing the explicit expressions of
exponential sums over those double cosets  related to the
evaluations of $\lq\lq$Gauss sums" for the orthogonal  groups
$O^{-}(2n,q)$.\\

  Index terms- Kloosterman sum, orthogonal  group, special orthogonal group, double cosets,
maximal parabolic subgroup, Pless power moment identity, weight
distribution.\\

MSC 2000: 11T23, 20G40, 94B05.

\end{abstract}

\maketitle

\section{Introduction}

 Let $\psi$ be a nontrivial additive character of the finite field
 $\mathbb{F}_{q}$ with $q=p^r$ elements ($p$ a prime). Then the
Kloosterman sum $K(\psi;a)$ (\cite{LN1}) is defined by

\begin{align*}
K(\psi;a)=\sum_{\alpha\in\mathbb{F}_{q}^{*}}\psi(\alpha_{}^{}+a\alpha_{}^{-1})
(a\in\mathbb{F}_{q}^{*}).
\end{align*}
For this, we have the Weil bound
\begin{equation}\label{a}
|K(\psi;a)|\leq 2\sqrt{q}.
\end{equation}
The Kloosterman sum was introduced in 1926(\cite{K1}) to give an
estimate for the Fourier coefficients of modular forms.

For each nonnegative integer $h$, by $MK(\psi)^{h}$ we will denote
the $h$-th moment of the Kloosterman sum $K(\psi;a)$. Namely, it is
given by
\begin{align*}
MK(\psi)^{h}=\sum_{a\in\mathbb{F}_{q}^{*}}K(\psi;a)^{h}.
\end{align*}
If $\psi=\lambda$ is the canonical additive character of
$\mathbb{F}_{q}$, then $MK(\lambda)^{h}$ will be simply denoted by
$MK^{h}$.

Also, we introduce an incomplete power moments of Kloosterman sums.
Namely, for every nonnegative integer $h$, and $\psi$ as before, we
define
\begin{equation}\label{b}
SK(\psi)^{h}=\sum_{a\in\mathbb{F}_{q}^{*},~a~square}K(\psi;a)^{h},
\end{equation}
which is called the $h$-th moment of Kloosterman sums with
$\lq\lq$square arguments". If $\psi=\lambda$ is the canonical
additive character of $\mathbb{F}_q$, then $SK(\lambda)^{h}$ will be
denoted by $SK^h$, for brevity.

Explicit computations on power moments of Kloosterman sums were
begun with the paper \cite{S1} of Sali\'{e} in 1931, where he
showed, for any odd prime $q$,
\begin{align*}
MK^h=q^2 M_{h-1} -(q-1)^{h-1}+2(-1)^{h-1}~(h\geq1).
\end{align*}
Here $M_0=0$, and, for $h\in\mathbb{Z}_{>0}$,
\begin{align*}
M_h=|\{(\alpha_1,\cdots,\alpha_h)\in(\mathbb{F}_q^*)^h|\sum_{j=1}^h
\alpha_j=1=\sum_{j=1}^h \alpha_j^{-1}\}|.
\end{align*}
For $q=p$ odd prime, Sali\'{e} obtained $MK^1$, $MK^2$, $MK^3$,
$MK^4$ in \cite{S1} by determining $M_1$, $M_2$, $M_3$. On the other
hand, $MK^5$ can be expressed in terms of the $p$-th eigenvalue for
a weight 3 newform on $\Gamma_0(15)$ (cf. \cite{L1}, \cite{PV1}).
$MK^6$ can be expressed in terms of the $p$-th eigenvalue for a
weight 4 newform on $\Gamma_0(6)$ (cf. \cite{HS1}). Also, based on
numerical evidence, in \cite{E1} Evans was led to propose a
conjecture which expresses $MK^7$ in terms of Hecke eigenvalues for
a weight 3 newform on $\Gamma_0(525)$ with quartic nebentypus of
conductor 105.

Assume from now on that $q=3^r$. Recently, Moisio was able to find
explicit expressions of $MK^h$, for $h\leq10$ (cf.\cite{M1}). This
was done, via Pless power moment identity, by connecting moments of
Kloosterman sums and the frequencies of weights in the ternary Melas
code of length $q-1$, which were known by the work of Geer, Schoof
and Vlugt in \cite{GS1}. In \cite{D6}, we were able to produce two
recursive formulas generating power moments of Kloosterman sums with
square arguments and one recursive formula generating even power
moments of those. To do that, we constructed three ternary linear
codes $C(SO^{-}(2,q))$, $C(O^{-}(2,q))$, $C(SO^{-}(4,q))$,
respectively associated with the orthogonal groups $SO^{-}(2,q)$,
$O^{-}(2,q)$, $SO^{-}(4,q)$, and express those power moments in
terms of the frequencies of weights in each code.  In \cite{DJ1},
the symplectic groups $Sp(2,q)$ and $Sp(4,q)$ were used instead in
order to produce recursive formulas generating power moments  and
even power moments of Kloosterman sums with square arguments.

  In this paper, we will be able to produce four infinite families of
recursive formulas generating power moments of Kloosterman sums with
square arguments and four infinite families of recursive formulas
generating even power moments of  those. To do that, we construct
eight  infinite families of ternary linear codes $C(DC_1^+(n,q))$
$(n=2,4,\cdots )$,  $C(DC_1^-(n,q))$ $(n=1,3,\cdots)$,  both
associated with  $Q\sigma_{n-1}Q$; $C(DC_2^+(n,q))$ $(n=2,4,
\cdots)$,  $C(DC_2^-(n,q))$ $(n=3,5,\cdots)$  both associated with
$Q\sigma_{n-2}Q$; $C(DC_3^+(n,q))$ $(n=2,4, \cdots)$,
$C(DC_3^-(n,q))$ $(n=3,5,\cdots)$  both associated with  $\rho
Q\sigma_{n-2}Q$; $C(DC_4^+(n,q))$ $(n=4,6,\cdots)$, $C(DC_4^-(n,q))$
$(n=3,5,\cdots)$  both associated with  $\rho Q\sigma_{n-3}Q$, with
respect to the maximal parabolic subgroup $Q=Q(2n,q)$ of the special
orthogonal group $SO^-(2n,q)$, and express those power moments in
terms of the frequencies of weights in each code. Then, thanks to
our previous results on the explicit expressions of exponential sums
over those double cosets related to the evaluations of $\lq\lq$Gauss
sums" for the orthogonal  groups $O^-(2n,q)$ $[4,5]$, we can express
the weight of each codeword in the duals of the codes in terms of
Kloosterman sums or squares of Kloosterman sums. Then our formulas
will follow immediately from the Pless power moment identity.
Analogously to these, in \cite{D5}, we obtained infinite families of
recursive formulas for power moments of Kloosterman sums with square
arguments and for even power moments of those by constructing
ternary linear codes associated with double cosets with respect to
certain maximal parabolic subgroup of the special orthogonal group
$SO^-(2n,q)$.

Theorem \ref{A} in the following (cf. (\ref{s}), (\ref{t}),
(\ref{v})-(\ref{y})) is the main result of this paper. Henceforth,
we agree that, for nonnegative integers $a$, $b$, $c$,

\begin{equation*}
{\binom{c}{a,b}}={\frac{c!}{a!~b!~(c-a-b)!}},~if~a+b\leq c,
\end{equation*}

and

\begin{equation*}\label{d}
{\binom{c}{a,b}}=0,~if~a+b>c.
\end{equation*}

To simplify notations, we introduce the following ones which will be
used throughout this paper at various places.

\begin{equation}\label{c}
A_1^+(n,q)=q^{\frac{1}{4}(5n^2-2n-4)}(q^{n-1}-1)\prod_{j=1}^{(n-2)/2}(q^{2j-1}-1),
\end{equation}

\begin{equation}\label{d}
B_1^+(n,q)=(q+1)q^{\frac{1}{4}n^2}\prod _{j=1}^{(n-2)/2}(q^{2j}-1),
\end{equation}

\begin{equation}\label{e}
A_2^+(n,q)=q^{\frac{1}{4}(5n^2-2n-8)}{\begin{bmatrix}
                                        n-1 \\
                                           1 \\
                                      \end{bmatrix}
 }_q\prod_{j=1}^{(n-2)/2}
(q^{2j-1}-1),
\end{equation}

\begin{equation}\label{f}
B_2^+(n,q)=(q+1)q^{\frac{1}{4}(n-2)^2}(q^{n-1}-1)\prod_{j=1}^{(n-2)/2}
(q^{2j}-1),
\end{equation}

\begin{equation}\label{g}
A_3^+(n,q)=(q+1)q^{\frac{1}{4}(5n^2-2n-8)}{\begin{bmatrix}
                                            n-1 \\
                                           1 \\
                                           \end{bmatrix}
 }_q\prod_{j=1}^{(n-2)/2}
(q^{2j-1}-1),
\end{equation}

\begin{equation}\label{h}
B_3^+(n,q)=q^{\frac{1}{4}(n-2)^2}(q^{n-1}-1)\prod _{j=1}^{(n-2)/2}
(q^{2j}-1),
\end{equation}

\begin{equation}\label{i}
A_4^+(n,q)=(q+1)q^{\frac{1}{4}(5n^2-6n-4)}{\begin{bmatrix}
                                             n-1 \\
                                           2 \\
                                           \end{bmatrix}
 }_q
\prod_{j=1}^{(n-2)/2} (q^{2j-1}-1),
\end{equation}

\begin{equation}\label{j}
B_4^+(n,q)=q^{{\frac{1}{4}(n-2)^2}}(q^{n-1}-1)\prod _{j=1}^{(n-2)/2}
(q^{2j}-1),
\end{equation}

\begin{equation}\label{k}
A_1^-(n,q)=q^{\frac{5}{4}(n^2-1)}\prod _{j=1}^{(n-1)/2}
(q^{2j-1}-1),
\end{equation}

\begin{equation}\label{l}
B_1^-(n,q)=(q+1)q^{\frac{1}{4}(n-1)^2}\prod_{j=1}^{(n-1)/2}
(q^{2j}-1),
\end{equation}

\begin{equation}\label{m}
A_2^-(n,q)=q^{\frac{1}{4}(5n^2-4n-5)}{\begin{bmatrix}
                                        n-1 \\
                                        1 \\
                                      \end{bmatrix}
}_q\prod_{j=1}^{(n-1)/2}
(q^{2j-1}-1),
\end{equation}

\begin{equation}\label{n}
B_2^-(n,q)=(q+1)q^{\frac{1}{4}(n-1)^2}\prod_{j=1}^{(n-1)/2}
(q^{2j}-1),
\end{equation}

\begin{equation}\label{o}
A_3^-(n,q)=(q+1)q^{\frac{1}{4}(5n^2-4n-5)}{\begin{bmatrix}
                                        n-1 \\
                                        1 \\
                                      \end{bmatrix}}_q\prod_{j=1}^{(n-1)/2}
(q^{2j-1}-1),
\end{equation}

\begin{equation}\label{p}
B_3^-(n,q)=q^{\frac{1}{4}(n-1)^2}\prod _{j=1}^{(n-1)/2}
(q^{2j}-1),
\end{equation}

\begin{equation}\label{q}
A_4^-(n,q)=(q+1)q^{\frac{1}{4}(5n^2-4n-9)}{\begin{bmatrix}
                                        n-1 \\
                                        2 \\
                                      \end{bmatrix}}_q\prod_{j=1}^{(n-3)/2}
(q^{2j-1}-1),
\end{equation}

\begin{equation}\label{r}
B_4^-(n,q)=q^{\frac{1}{4}(n-3)^2}(q^{n-2}-1)(q^{n-1}-1)\prod_{j=1}^{(n-3)/2}
(q^{2j}-1).
\end{equation}

From now on, it is assumed that either $+$signs or $-$signs are
chosen everywhere, whenever $\pm$ signs appear.

\begin{theorem}\label{A}

Let $q=3^r$. Then with the notations in (\ref{c})-(\ref{r}), we have
the following.

\item\label{Aa}
$(1)$ With  $i=1,3,$ and $+$  signs everywhere for $\pm$ signs,  we
have a recursive formula generating power moments of Kloosterman
sums with square arguments over $\mathbb{F}_q$ (cf. (\ref{b})) for
each
 $n\geq2$ even and all $q$; with $i=1$ and $-$ signs everywhere for
 $\pm$ signs, we have such a fomula, for each $n\geq1$ odd and all
 $q$; with $i=3$ and $-$ signs everywhere for $\pm$ signs,  we have such a formula,
 for each $n\geq3$ odd and all $q$.

\begin{equation}\label{s}
\begin{split}&(\pm(-1))^h SK_{}^{h}=-\sum_{l=0}^{h-1}(\pm((-1))^l{\binom{h}{l}}
B_i^{\pm}(n,q)^{h-l}SK_{}^{l}\\
            &~\quad\quad
            +qA_i^{\pm}(n,q)^{-h}\sum_{j=0}^{min\{N_i^{\pm}(n,q),h\}}(-1)^{j}C_{i,j}^{\pm}(n,q)\\
            &~\quad\quad\times\sum_{t=j}^{h}t!S(h,t)3^{h-t}2^{t-h-j-1}{\binom{N_i^{\pm}(n,q)
            -j}{N_i^{\pm}(n,q)-t}}~(h=1,2,\cdots),
\end{split}
\end{equation}
where
$n_i^{\pm}(n,q)=|DC_i^{\pm}(n,q)|=A_i^{\pm}(n,q)B_i^{\pm}(n,q)$, and
$\{C_{i,j}^{\pm}(n,q)\}_{j=0}^{N_i^{\pm}(n,q)}$ is the weight
distribution of the ternary linear code $C(DC_i^{\pm}(n,q))$ given
by

\begin{equation*}
C_{i,j}^{\pm}(n,q)=
\sum\binom{q^{-1}A_i^{\pm}(n,q)(B_i^{\pm}(n,q)\pm1)}{\nu_1,\mu_1}
\binom{q^{-1}A_i^{\pm}(n,q)(B_i^{\pm}(n,q)\pm1)}{\nu_{-1},\mu_{-1}}
\end{equation*}
\begin{equation}\label{t}
\begin{split}
&\qquad\quad\times\prod_{\beta^2-1\neq0 ~square}\binom{q^{-1}A_i^{\pm}(n,q)(B_i^{\pm}(n,q)\pm(q+1))}{\nu_\beta,\mu_\beta}\\
&\qquad\quad\times\prod_{\beta^2-1
~nonsquare}\binom{q^{-1}A_i^{\pm}(n,q)(B_i^{\pm}(n,q)\pm(-q+1))}{\nu_\beta,\mu_\beta},
\end{split}
\end{equation}
with the sum running over all the sets of nonnegative integers
$\{\nu_\beta\}_{\beta\in\mathbb{F}_{q}}$ and
$\{\mu_\beta\}_{\beta\in\mathbb{F}_{q}}$ satisfying

\begin{align*}
\sum_{\beta\in\mathbb{F}_q} \nu_\beta
+\sum_{\beta\in\mathbb{F}_q}\mu_\beta=j,\quad and \quad
\sum_{\beta\in\mathbb{F}_q} \nu_\beta
\beta=\sum_{\beta\in\mathbb{F}_q}\mu_\beta \beta.
\end{align*}

In addition, $S(h,t)$  is the Stirling number of the second kind
defined by
\begin{equation}\label{u}
S(h,t)=\frac{1}{t!}\sum_{j=0}^{t}(-1)^{t-j}{\binom{t}{j}}j^h.
\end{equation}
\item\label{Ab}
$(2)$ With $+$ signs everywhere for $\pm$ signs,  we have recursive
formulas generating
 even power moments of Kloosterman sums with square arguments over $\mathbb{F}_{q}$,
 for each $n\geq2$ even and all $q$; with $-$ signs everywhere for $\pm$ signs,
 we have such a formula, for each $n\geq3$ odd and  all $q$.

\begin{equation}\label{v}
\begin{split}&(\pm1)^{h}SK_{}^{2h}=-\sum_{l=0}^{h-1}(\pm1)^{l}{\binom{h}{l}}B_2^{\pm}(n,q)^{h-l}SK_{}^{2l}\\
            &~\quad\quad
            +qA_2^{\pm}(n,q)^{-h}\sum_{j=0}^{min\{N_2^{\pm}(n,q),h\}}(-1)^{j}C_{2,j}^{\pm}(n,q)\\
            &\qquad\quad\quad\quad\times\sum_{t=j}^{h}t!S(h,t)3^{h-t}2^{t-h-j-1}{\binom{N_2^{\pm}(n,q)
            -j}{N_2^{\pm}(n,q)-t}}~ (h=1,2,\cdots),
\end{split}
\end{equation}
where
$n_2^{\pm}(n,q)=|DC_2^{\pm}(n,q)|=A_2^{\pm}(n,q)B_2^{\pm}(n,q)$, and
$\{C_{2,j}^{\pm}(n,q)\}_{j=0}^{N_2^{\pm}(n,q)}$ is the weight
distribution of the ternary linear code $C(DC_2^{\pm}(n,q))$ given
by
\begin{equation}\label{w}
C_{2,j}^{\pm}(n,q)= \sum\prod_{\beta\in
\mathbb{F}_{q}}\binom{q^{-1}A_2^{\pm}(n,q)(B_2^{\pm}(n,q){\pm}((q-1)^2-q\delta(2,q;\beta)))}{\nu_\beta,\mu_\beta},
\end{equation}
with the sum running over all the sets of nonnegative integers
$\{\nu_\beta\}_{\beta\in\mathbb{F}_{q}}$ and
$\{\mu_\beta\}_{\beta\in\mathbb{F}_{q}}$ satisfying

\begin{align*}
\sum_{\beta\in\mathbb{F}_q} \nu_\beta
+\sum_{\beta\in\mathbb{F}_q}\mu_\beta=j,\quad and \quad
\sum_{\beta\in\mathbb{F}_q} \nu_\beta
\beta=\sum_{\beta\in\mathbb{F}_q}\mu_\beta \beta,
\end{align*}
and $\delta(2,q;\beta)=|\{(\alpha_1,\alpha_2)\in\mathbb{F}_{q}^{2}|
\alpha_1+\alpha_2+\alpha_1^{-1}+\alpha_2^{-1}=\beta\}|$.

\item\label{Ac}
$(3)$ With $+$ signs everywhere for $\pm$ signs,  we have recursive
formulas generating
 even power moments of Kloosterman sums with square arguments over $\mathbb{F}_{q}$,
 for each $n\geq4$ even and all $q$; with $-$ signs everywhere for $\pm$ signs,
 we have such a formula, for each $n\geq3$ odd and  all $q$.

\begin{equation}\label{x}
\begin{split}
&(\pm1)^{h}SK_{}^{2h}=-\sum_{l=0}^{h-1}(\pm1)^{l}{\binom{h}{l}}
\{B_{4}^{\pm}(n,q){\pm}(q^2-q)\}^{h-l}SK_{}^{2l}\\
&~\quad\quad
+qA_4^{\pm}(n,q)^{-h}\sum_{j=0}^{min\{N_4^{\pm}(n,q),h\}}(-1)^{j}C_{4,j}^{\pm}(n,q)\\
&\qquad\quad\quad\quad\times\sum_{t=j}^{h}t!S(h,t)3^{h-t}2^{t-h-j-1}{\binom{N_4^{\pm}(n,q)
-j}{N_4^{\pm}(n,q)-t}}~ (h=1,2,\cdots),
\end{split}
\end{equation}
where
$n_4^{\pm}(n,q)=|DC_4^{\pm}(n,q)|=A_4^{\pm}(n,q)B_4^{\pm}(n,q)$, and
$\{C_{4,j}^{\pm}(n,q)\}_{j=0}^{N_4^{\pm}(n,q)}$ is the weight
distribution of the ternary linear code $C(DC_4^{\pm}(n,q))$ given
by

\begin{equation*}
C_{4,j}^{\pm}(n,q)=
\sum\binom{q^{-1}A_4^{\pm}(n,q)(B_4^{\pm}(n,q)\pm(-1)(q\delta(2,q;\beta)+(q-1)^3))}{\nu_0,\mu_0}
\end{equation*}
\begin{equation}\label{y}
\qquad\quad\times\prod_{\beta\neq0}\binom{q^{-1}A_4^{\pm}(n,q)(B_4^{\pm}(n,q)\pm(-1)(q\delta(2,q;\beta)-2q^2+3q-1))}{\nu_\beta,\mu_\beta},
\end{equation}
with the sum running over all the sets of nonnegative integers
$\{\nu_\beta\}_{\beta\in\mathbb{F}_{q}}$ and
$\{\mu_\beta\}_{\beta\in\mathbb{F}_{q}}$ satisfying

\begin{align*}
\sum_{\beta\in\mathbb{F}_q} \nu_\beta
+\sum_{\beta\in\mathbb{F}_q}\mu_\beta=j,\quad and \quad
\sum_{\beta\in\mathbb{F}_q} \nu_\beta
\beta=\sum_{\beta\in\mathbb{F}_q}\mu_\beta \beta,
\end{align*}

\end{theorem}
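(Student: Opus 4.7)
The proof plan is to adapt the code-construction scheme of the author's earlier papers \cite{D6,DJ1,D5} to the four infinite families of double cosets $Q\sigma_{n-1}Q$, $Q\sigma_{n-2}Q$, $\rho Q\sigma_{n-2}Q$, $\rho Q\sigma_{n-3}Q$ inside $SO^{-}(2n,q)$, and then to feed the resulting weight distributions into the Pless power moment identity. Concretely, I would proceed in four stages.

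\textbf{Stage 1 (Construction).} For each sign and index, I would define $C(DC_i^{\pm}(n,q))$ as the ternary linear code whose parity-check matrix has columns indexed, in some fixed order, by the elements of the double coset $DC_i^{\pm}(n,q)$, with entries extracted from a distinguished coordinate function $f$ of those group elements (the same function that appears in the exponential sum considered in \cite{D6}). This gives a code of length $N_i^{\pm}(n,q)=A_i^{\pm}(n,q)B_i^{\pm}(n,q)$, in agreement with the cardinalities listed in \eqref{c}--\eqref{r}.

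\textbf{Stage 2 (Weight of dual codewords via Gauss-sum evaluations).} A codeword $c(a)$ of the dual code is parameterized by $a\in\mathbb{F}_q$, and standard orthogonality yields
\begin{equation*}
w(c(a))=\tfrac{q-1}{q}N_i^{\pm}(n,q)-\tfrac{1}{q}\sum_{t\in\mathbb{F}_q^{*}}T_i^{\pm}(ta,n,q),\qquad T_i^{\pm}(b,n,q)=\sum_{g\in DC_i^{\pm}(n,q)}\lambda(bf(g)).
\end{equation*}
I would then invoke the explicit evaluations of these exponential sums over double cosets that were established in the author's previous work on $\lq\lq$Gauss sums" for the orthogonal groups $O^{-}(2n,q)$. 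Because the coordinate function $f$ comes from the first-row entry of the appropriate block in $Q\sigma Q$, the relevant Kloosterman sums that appear are $K(\lambda;a^{2})$ (i.e.\ with square arguments), with the exact shape of the weight being $q^{-1}A_i^{\pm}(n,q)$ times $(B_i^{\pm}(n,q)\pm\cdots)$, where the $\lq\lq\cdots$'' is precisely one of the four inner expressions appearing in \eqref{t}, \eqref{w}, or \eqref{y}, and depends on whether $a=0$, $a^{2}-1$ is a nonzero square, or $a^{2}-1$ is a nonsquare; in the ``even moment'' cases $i=2,4$ one obtains $K(\lambda;a^{2})^{2}$, whose distribution over $a$ is controlled by $\delta(2,q;\beta)$.

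\textbf{Stage 3 (Pless identity and isolation of the leading moment).} The $h$-th moment of dual weights, namely $\sum_{a\in\mathbb{F}_q}w(c(a))^{h}$, is evaluated in two ways. On the one hand, Pless's identity expresses it as
\begin{equation*}
\sum_{j=0}^{\min\{N_i^{\pm}(n,q),h\}}(-1)^{j}C_{i,j}^{\pm}(n,q)\sum_{t=j}^{h}t!\,S(h,t)2^{t-j}3^{r(k-t)}\binom{N_i^{\pm}(n,q)-j}{N_i^{\pm}(n,q)-t},
\end{equation*}
after collecting the powers of $q=3^{r}$ and the $(q-1)^{t-j}=2^{t-j}$ appearing in the ternary version of the identity, and rearranging factors of $q$ to match the form of \eqref{s}, \eqref{v}, \eqref{x}. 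On the other hand, using Stage 2, the same sum is, up to the overall constant $q^{-h}A_i^{\pm}(n,q)^{h}$, equal to
\begin{equation*}
(\pm B_i^{\pm}(n,q))^{h}+\sum_{a\in\mathbb{F}_q^{*}}\bigl(\mp K(\lambda;a^{2})\bigr)^{h}\quad\text{or its square-power analogue},
\end{equation*}
the first term being the $a=0$ contribution and the second covering the split of $\mathbb{F}_q^{*}$ into squares and non-squares. Expanding $(\cdots)^{h}=(B_i^{\pm}(n,q)\mp K(\lambda;a^{2}))^{h}$ by the binomial theorem and using the observation that $K(\lambda;a^{2})=K(\lambda;(-a)^{2})$ collapses the square-argument sum to $2SK^{h}$ (resp.\ $2SK^{2h}$), separates the leading $SK^{h}$ from strictly lower moments $SK^{l}$ ($l<h$). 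Solving for $SK^{h}$ (or $SK^{2h}$) produces the recursion \eqref{s} (resp.\ \eqref{v}, \eqref{x}).

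\textbf{Main obstacle.} The heart of the work, and the step I expect to be the most delicate, is Stage 2: showing that each of the four families of double-coset Gauss sums, with both parities of $n$, evaluates so as to produce \emph{exactly} the four distinct inner expressions in \eqref{t}, \eqref{w}, \eqref{y}, including the correct sign and the correct dependence on $\delta(2,q;\beta)$ for the even-power cases $i=2,4$. The bookkeeping of the factors $q^{(5n^{2}-\cdots)/4}$ in \eqref{c}--\eqref{r} is entirely driven by this step, and will require careful application of the previously established formulas for exponential sums over $Q\sigma_{n-1}Q$, $Q\sigma_{n-2}Q$, $\rho Q\sigma_{n-2}Q$, $\rho Q\sigma_{n-3}Q$ together with the known orders $|Q|$ and $|DC_i^{\pm}(n,q)|$.
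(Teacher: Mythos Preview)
Your overall strategy is the same as the paper's: build ternary codes on the double cosets, evaluate exponential sums over those cosets to get Kloosterman sums with square arguments, and run both the dual-weight computation and the primal weight distribution through the Pless identity. That framework is correct.

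However, your Stage~2 conflates two distinct computations that the paper keeps separate, and as written it would not produce the formulas in \eqref{t}, \eqref{w}, \eqref{y}. The quantity $q^{-1}A_i^{\pm}(n,q)(B_i^{\pm}(n,q)\pm\cdots)$, with the inner term depending on whether $\beta^{2}-1$ is a square, is \emph{not} the weight $w(c(a))$ of a dual codeword; it is $N_{DC_i^{\pm}(n,q)}(\beta)=|\{w\in DC_i^{\pm}(n,q):\mathrm{Tr}\,w=\beta\}|$. These counts are obtained by Fourier inversion from the exponential-sum evaluation, and they enter the argument only as the upper entries of the multinomials in the combinatorial formula for the primal weight distribution $C_{i,j}^{\pm}(n,q)$. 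The dual weight, by contrast, is (since the code is ternary) $w(c_i^{\pm}(a))=\tfrac{2}{3}N_i^{\pm}(n,q)-\tfrac{1}{3}\sum_{\alpha\in\mathbb{F}_3^{*}}\sum_{w\in DC_i^{\pm}}\lambda(\alpha a\,\mathrm{Tr}\,w)$, which by the exponential-sum evaluation equals $\tfrac{2}{3}A_i^{\pm}(n,q)\bigl(B_i^{\pm}(n,q)\mp K(\lambda;a^{2})\bigr)$ for $i=1,3$ (and the analogous expressions with $K(\lambda;a^{2})^{2}$ for $i=2,4$). Note the factor $\tfrac{2}{3}$, not $q^{-1}$, and the inner term is $K(\lambda;a^{2})$ itself, not a case split on $a^{2}-1$. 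Relatedly, your $\tfrac{q-1}{q}$ and $\sum_{t\in\mathbb{F}_q^{*}}$ in Stage~2 should be $\tfrac{2}{3}$ and $\sum_{\alpha\in\mathbb{F}_3^{*}}$: the codes are ternary, parameterized by $a\in\mathbb{F}_q$ via the trace $\mathrm{tr}:\mathbb{F}_q\to\mathbb{F}_3$ (this is where Delsarte's theorem is used).

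Two smaller corrections to Stage~3. First, there is no ``$a=0$ contribution'' of $(\pm B_i^{\pm}(n,q))^{h}$: the codeword $c(0)$ is the zero word, so $w(c(0))^{h}=0$ for $h\ge 1$, and the left side of Pless is simply $\sum_{a\in\mathbb{F}_q^{*}}w(c_i^{\pm}(a))^{h}$. Second, the Gauss-sum evaluations you flag as the ``main obstacle'' are not reproved in this paper; they are quoted from the author's earlier work on $O^{-}(2n,q)$, and the present argument only specializes them at $r=n-1,n-2,n-3$ and packages the constants into the $A_i^{\pm},B_i^{\pm}$ of \eqref{c}--\eqref{r}. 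One genuine ingredient you have not mentioned is the verification that $a\mapsto c_i^{\pm}(a)$ is injective (so that $\dim_{\mathbb{F}_3}C(DC_i^{\pm}(n,q))^{\perp}=r$), which is needed to match the exponent of $3$ on the right side of Pless; in the paper this is handled case by case, with the borderline cases $DC_3^{+}(2,q)$ and $DC_1^{-}(1,q)$ requiring the Weil bound.
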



\section{$O^{-}(2n,q)$}

For more details about this section, one is referred to the paper
\cite{D1} and \cite{D2}. Throughout this paper, the following
notations will be used:
\begin{align*}
\begin{split}
q&=3^r~(r\in\mathbb{Z}_{>0}),\\
\mathbb{F}_q&=~the~finite~field~ with~ q~ elements,\\
TrA&=~the~ trace~ of~ A~ for~ a~ square~ matrix~ A,\\
^t B&=~the~transpose~ of~ B~for~any~matrix~B.
\end{split}
\end{align*}

The orthogonal group $O^-(2n,q)$ over the field $\mathbb{F}_q$ is
defined as:
\begin{align*}
O^-(2n,q)=\{w\in GL(2n,q)|^twJw=J\},
\end{align*}
where
\begin{align*}
J=\begin{bmatrix}
    0 & 1_{n-1} & 0 & 0 \\
    1_{n-1} & 0 & 0 & 0 \\
    0 & 0 & 1 & 0 \\
    0 & 0 & 0 & -\epsilon \\
  \end{bmatrix}
,
\end{align*}
and $\epsilon$ is a fixed element in $\mathbb{F}_q^*\setminus
{\mathbb{F}_q^*}^2$, here and throughout this paper. \\
For convenience, we put
\begin{equation}\label{z}
\delta_\epsilon=\begin{bmatrix}
                  1 & 0 \\
                  0 & -\epsilon \\
                \end{bmatrix}
.
 \end{equation}
Then $O^-(2n,q)$ consists of all matrices

\begin{align*}
\begin{bmatrix}
  A & B & e \\
  C & D & f \\
  g & h & i \\
\end{bmatrix}
\begin{split}
(A,B,C,D~(n-&1)\times(n-1),e,f~(n-1)\times2,\\
&g,h~2\times(n-1),i~2\times2)
\end{split}
\end{align*}
in $GL(2n,q)$ satisfying the relations:

\begin{align*}
\begin{split}
&{}^tAC+{}^tCA+{}^tg\delta_\epsilon g=0,\\
&{}^tBD+{}^tDB+{}^th\delta_\epsilon h=0,\\
&{}^tef+{}^tfe+{}^ti\delta_\epsilon i=\delta_\epsilon,\\
&{}^tAD+{}^tCB+{}^tg\delta_\epsilon h=1_{n-1},\\
&{}^tAf+{}^tCe+{}^tg\delta_\epsilon i=0,\\
&{}^tBf+{}^tDe+{}^th\delta_\epsilon i=0.\\
\end{split}
\end{align*}

The special orthogonal group $SO^-(2n,q)$ over the field
$\mathbb{F}_q$ is defined as:

\begin{align*}
SO^-(2n,q)=\{w\in O^-(2n,q)|det~ w=1\},
\end{align*}
which is a subgroup of index 2 in $O^-(2n,q)$.

In particular, we have
\begin{align*}
O^-(2,q)&=\{i\in GL(2,q)|^ti\delta_\epsilon i=\delta_\epsilon\}
\end{align*}
\begin{equation}\label{a1}
\qquad \qquad \qquad \qquad \qquad =SO^-(2,q)\amalg\begin{bmatrix}
                                                     1 & 0 \\
                                                     0 & -1 \\
                                                    \end{bmatrix}
SO^-(2,q),~
\end{equation}
with
\begin{align*}
\begin{split}
SO^-(2,q)&=\left\{\begin{bmatrix}
               a & b\epsilon \\
               b & a \\
             \end{bmatrix}
\Bigg| ~a,b\in\mathbb{F}_q,~a^2-b^2 \epsilon =1\right\}\\
&=\left\{\begin{bmatrix}
     a & b\epsilon \\
     b & a \\
    \end{bmatrix}
\Bigg|~a+b\epsilon\in\mathbb{F}_q(\epsilon)~with~N_{\mathbb{F}_q(\epsilon)/\mathbb{F}_q}(a+b\epsilon)=1\right\}.
\end{split}
\end{align*}

Let $P(2n,q)$ be the maximal parabolic subgroup of $O^-(2n,q)$ given
by
\begin{align*}
\begin{split}
P&=P(2n,q)\\
&=\left\{\begin{bmatrix}
       A & 0 & 0 \\
        0 & ^tA^{-1} & 0 \\
        0 & 0 & i \\
    \end{bmatrix}
\begin{bmatrix}
  1_{n-1} & B & -^th\delta_\epsilon \\
               0 & 1_{n-1} & 0 \\
               0 & h & 1_2 \\
\end{bmatrix}
\Bigg|\begin{array}{c}
   A\in GL(n-1,q) \\
   i\in O^-(2,q) \\
   ^tB+B+^th\delta_\epsilon h=0
 \end{array}
\right\},
\end{split}
\end{align*}
and let $Q=Q(2n,q)$ be the subgroup of $P(2n,q)$ of index 2 defined
by
\begin{align*}
\begin{split}
Q&=Q(2n,q)\\
&=\left\{\begin{bmatrix}
       A & 0 & 0 \\
        0 & ^tA^{-1} & 0 \\
        0 & 0 & i \\
    \end{bmatrix}
\begin{bmatrix}
  1_{n-1} & B & -^th\delta_\epsilon \\
               0 & 1_{n-1} & 0 \\
               0 & h & 1_2 \\
\end{bmatrix}
\Bigg|~\begin{array}{c}
   A\in GL(n-1,q) \\
   i\in SO^-(2,q) \\
   ^tB+B+^th\delta_\epsilon h=0
 \end{array}
\right\}.
\end{split}
\end{align*}
From (\ref{a1}), we see that
\begin{equation}\label{b1}
P=Q\amalg\rho Q,
\end{equation}
with
\begin{align*}
\rho=\begin{bmatrix}
      1_{n-1} & 0 & 0 & 0 \\
         0 & 1_{n-1} & 0 & 0 \\
         0 & 0 & 1 & 0 \\
         0 & 0 & 0 & -1 \\
     \end{bmatrix}
 .
\end{align*}

Let $\sigma_r$ denote the following matrix in $O^{-}(2n,q)$

\begin{align*}
\sigma_r=\begin{bmatrix}
           0 & 0 & 1_r & 0 & 0 \\
             0 & 1_{n-1-r} & 0 & 0 & 0 \\
             1_r & 0 & 0 & 0 & 0 \\
             0 & 0 & 0 & 1_{n-1-r} & 0 \\
             0 & 0 & 0 & 0 & 1_2 \\
         \end{bmatrix}
 ~(0\leq r\leq n-1).
\end{align*}
Then the Bruhat decomposition of $O^-(2n,q)$ with respect to
$P=P(2n,q)$ is given by
\begin{align}\label{c1}
O^-(2n,q)=\coprod_{r=0}^{n-1}P\sigma_r
P=\coprod_{r=0}^{n-1}P\sigma_r Q,
\end{align}
which can further be modified as
\begin{equation}\label{d1}
\begin{split}
O^-(2n,q)&=\coprod_{r=0}^{n-1}P\sigma_r (B_r\setminus Q) \\
~\quad\quad&=\coprod_{r=0}^{n-1}Q\sigma_r (B_r\setminus Q)\amalg
\coprod_{r=0}^{n-1}(\rho Q)\sigma_r (B_r\setminus Q),
\end{split}
\end{equation}
with
\begin{align*}
B_r=B_r (q)=\{w\in Q(2n,q)|\sigma_r w \sigma_r^{-1}\in P\}.
\end{align*}

The order of the general linear group $GL(n,q)$ is given by
\begin{align*}
g_n=\prod_{j=0}^{n-1}(q^{n}-q^{j})=q^{\binom{n}{2}}\prod_{j=1}^{n}(q^j-1).
\end{align*}
For integers $n,r$ with $0\leq r\leq n$, the $q$-binomial
coefficients are defined as:

\begin{align*}
\begin{bmatrix}
  n \\
  r \\
\end{bmatrix}
_q=\prod_{j=0}^{r-1}(q^{n-j}-1)/(q^{r-j}-1).
\end{align*}

Then one can show that
\begin{align*}
|P(2n,q)|=2(q+1)g_{n-1}q^{(n-1)(n+2)/2},
\end{align*}
\begin{align}\label{e1}
|B_r\setminus Q|={\Big[
                    \begin{array}{c}
                      n-1 \\
                      r \\
                    \end{array}
                  \Big]
 }_q q^{r(r+3)/2}(0\leq r\leq n-1)~
\end{align}
\qquad \qquad \qquad \qquad \qquad \qquad \qquad \qquad \qquad
\qquad (cf. \cite{D1}, (3.12), (3.20), (3.21)),

\begin{equation}\label{f1}
\begin{split}
|Q(2n,q)\sigma_r Q(2n,q)|&=|\rho Q(2n,q)\sigma_r Q(2n,q)|\\
                         &=\frac{1}{2}|P(2n,q)\sigma_r Q(2n,q)|\\
                         &=\frac{1}{2}|P(2n,q)||B_r \setminus Q(2n,q)|\\
                         &=(q+1)q^{n^2-n}\prod_{j=1}^{n-1}(q^j-1){\Big[
                    \begin{array}{c}
                      n-1 \\
                      r \\
                    \end{array}
                  \Big]}_q q^{\binom{r}{2}
                         }q^{2r}.
\end{split}
\end{equation}

Let
\begin{equation}\label{g1}
DC_{1}^{+}(n,q)=Q(2n,q)\sigma_{n-1}Q(2n,q), \quad
for~n=2,4,6,\cdots,
\end{equation}
\begin{equation}\label{h1}
DC_{2}^{+}(n,q)=Q(2n,q)\sigma_{n-2}Q(2n,q), \quad
for~n=2,4,6,\cdots,
\end{equation}
\begin{equation}\label{i1}
DC_{3}^{+}(n,q)=\rho Q(2n,q)\sigma_{n-2}Q(2n,q), \quad
for~n=2,4,6,\cdots,
\end{equation}
\begin{equation}\label{j1}
DC_{4}^{+}(n,q)=\rho Q(2n,q)\sigma_{n-3}Q(2n,q), \quad
for~n=4,6,8,\cdots,
\end{equation}
\begin{equation}\label{k1}
DC_{1}^{-}(n,q)=Q(2n,q)\sigma_{n-1}Q(2n,q), \quad
for~n=1,3,5,\cdots,
\end{equation}
\begin{equation}\label{l1}
DC_{2}^{-}(n,q)=Q(2n,q)\sigma_{n-2}Q(2n,q), \quad for~n=3,5,7\cdots,
\end{equation}
\begin{equation}\label{m1}
DC_{3}^{-}(n,q)=\rho Q(2n,q)\sigma_{n-2}Q(2n,q), \quad
for~n=3,5,7,\cdots,
\end{equation}
\begin{equation}\label{n1}
DC_{4}^{-}(n,q)=\rho Q(2n,q)\sigma_{n-3}Q(2n,q), \quad
for~n=3,5,7,\cdots.
\end{equation}
Then, from (\ref{f1}), we have:
\begin{equation}\label{o1}
N_i^{\pm}(n,q)=|DC_i^{\pm}(n,q)|=A_i^{\pm}(n,q)B_i^{\pm}(n,q), ~for~
i=1,2,3,4
\end{equation}
(cf. (\ref{c})-(\ref{r})).

Unless otherwise stated, from now on, we will agree that anything
related to $DC_{1}^{+}(n,q)$, $DC_{2}^{+}(n,q)$  and
$DC_{3}^{+}(n,q)$ are defined for $n=2,4,6,\cdots,$ anything related
to $DC_{4}^{+}(n,q)$ is defined for $n=4,6,8,\cdots,$ anything
related to $DC_{1}^{-}(n,q)$ is defined for $n=1,3,5,\cdots$, and
anything related to $DC_{2}^{-}(n,q)$, $DC_{3}^{-}(n,q)$, and
$DC_{4}^{-}(n,q)$ are defined for $n=3,5,7,\cdots$.


\section{Exponential sums over double cosets of  $O^{-}(2n,q)$}
The following notations will be employed throughout this paper.
\begin{equation*}
\begin{split}
&tr(x)=x+x^{3}+\cdots+x^{3^{r-1}} ~the~trace~function~
\mathbb{F}_{q}\rightarrow\mathbb{F}_{3},\\
&\lambda_{0}(x)=e^{2\pi i x/3}~the ~canonical ~additive ~character
~of ~\mathbb{F}_{3},\\
&\lambda (x)=e^{2\pi i tr(x)/3}~the
~canonical ~additive ~character~ of~ \mathbb{F}_{q}.
\end{split}
\end{equation*}

Then any nontrivial additive character $\psi$ of $\mathbb{F}_{q}$ is
given by $\psi(x)=\lambda(ax)$, for a unique $a\in
\mathbb{F}_{q}^{*}$.

For any nontrivial additive character $\psi$ of $\mathbb{F}_q$ and
$a\in\mathbb{F}_{q}^{*}$, the Kloosterman sum $K_{GL(t,q)}(\psi;a)$
for $GL(t,q)$ is defined as
\begin{align*}
K_{GL(t,q)}(\psi;a)=\sum_{w\in GL(t,q)}\psi(Trw+aTrw^{-1}).
\end{align*}
Notice that, for $t=1$, $K_{GL(1,q)}(\psi;a)$ denotes the
Kloosterman sum $K(\psi;a)$.

In \cite{D3}, it is shown that $K_{GL(t,q)}(\psi;a)$ satisfies the
following recursive relation: for integers $t\geq2$,
$a\in\mathbb{F}_{q}^{*}$,
\begin{equation}\label{p1}
K_{GL(t,q)}(\psi;a)=q^{t-1}K_{GL(t-1,q)}(\psi;a)K(\psi;a)+q^{2t-2}(q^{t-1}-1)K_{GL(t-2,q)}(\psi;a),
\end{equation}
where we understand that $K_{GL(0,q)}(\psi,a)=1$.

\begin{proposition}\label{B}$($\cite{D1}$)$
Let $\psi$ be a nontrivial additive character of $\mathbb{F}_q$. For
each positive integer $r$, let $\Omega_r$ be the set of all $r\times
r$ nonsingular symmetric matrices over $F_q$. Then, with
$\delta_\epsilon$ as in (\ref{z}), we have
\begin{align*}
\begin{split}
b_r(\psi)&=\sum_{B\in\Omega_r}\sum_{h\in\mathbb{F}_{q}^{r\times 2}}\psi(Tr\delta_\epsilon {^t}hBh)\\
&=
\begin{cases}
q^{r(r+6)/4}\prod_{j=1}^{r/2}(q^{2j-1}-1),& \text {for r even,}\\
-q^{(r^2+4r-1)/4}\prod_{j=1}^{(r+1)/2}(q^{2j-1}-1),& \text {for r
odd.}
\end{cases}
\end{split}
\end{align*}
\end{proposition}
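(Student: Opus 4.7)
The plan is to evaluate the inner sum on $h$ for each fixed $B$, verify that it is independent of $B$, and then reduce the Proposition to the count $|\Omega_{r}|$ of nonsingular symmetric $r\times r$ matrices.

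Writing $h=[h_{1}\ h_{2}]$ with $h_{1},h_{2}\in\mathbb{F}_{q}^{r}$, the diagonal form of $\delta_{\epsilon}$ in (\ref{z}) yields
\[
Tr(\delta_{\epsilon}\,{}^{t}hBh)={}^{t}h_{1}Bh_{1}-\epsilon\,{}^{t}h_{2}Bh_{2},
\]
so the inner sum factors as
\[
\sum_{h\in\mathbb{F}_{q}^{r\times 2}}\psi(Tr(\delta_{\epsilon}\,{}^{t}hBh))=\Bigl(\sum_{h_{1}\in\mathbb{F}_{q}^{r}}\psi({}^{t}h_{1}Bh_{1})\Bigr)\Bigl(\sum_{h_{2}\in\mathbb{F}_{q}^{r}}\psi(-\epsilon\,{}^{t}h_{2}Bh_{2})\Bigr).
\]

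For each factor I would invoke the standard quadratic Gauss sum evaluation: since any nonsingular symmetric $C$ over $\mathbb{F}_{q}$ in odd characteristic is congruent to a diagonal matrix, the one-variable identity $\sum_{x}\psi(dx^{2})=\eta(d)G$ with $G=\sum_{x}\psi(x^{2})$ and $G^{2}=\eta(-1)q$ (where $\eta$ is the quadratic character of $\mathbb{F}_{q}^{*}$) multiplies up to
\[
\sum_{h\in\mathbb{F}_{q}^{r}}\psi({}^{t}hCh)=\eta(\det C)\,G^{r}.
\]
Applying this with $C=B$ and with $C=-\epsilon B$ and multiplying, the factor $\eta(\det B)^{2}=1$ disappears and the inner sum reduces to
\[
\eta((-\epsilon)^{r})G^{2r}=\eta(-1)^{r}\eta(\epsilon)^{r}\cdot\eta(-1)^{r}q^{r}=\eta(\epsilon)^{r}q^{r}=(-1)^{r}q^{r},
\]
using $\eta(\epsilon)=-1$ since $\epsilon$ is a nonsquare. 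Crucially this is independent of $B$, so $b_{r}(\psi)=(-1)^{r}q^{r}|\Omega_{r}|$.

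It remains to count $|\Omega_{r}|$. Here I would use the action of $GL(r,q)$ on $\Omega_{r}$ by $g\cdot B={}^{t}gBg$, which in odd characteristic has exactly two orbits (indexed by the square class of $\det B$) whose stabilizers are the orthogonal groups of the two non-isometric nondegenerate symmetric bilinear forms on $\mathbb{F}_{q}^{r}$. Substituting the known orders of these orthogonal groups and of $GL(r,q)$ into the orbit-stabilizer sum $|\Omega_{r}|=|GL(r,q)|(|O^{+}|^{-1}+|O^{-}|^{-1})$ for even $r$, respectively $|\Omega_{r}|=2|GL(r,q)|/|O(r,q)|$ for odd $r$, and simplifying the $q$-factorials, yields
\[
|\Omega_{r}|=\begin{cases} q^{r(r+2)/4}\prod_{j=1}^{r/2}(q^{2j-1}-1), & r\text{ even},\\ q^{(r^{2}-1)/4}\prod_{j=1}^{(r+1)/2}(q^{2j-1}-1), & r\text{ odd}. \end{cases}
\]
Multiplying by $(-1)^{r}q^{r}$ and consolidating the exponents $r+r(r+2)/4=r(r+6)/4$ and $r+(r^{2}-1)/4=(r^{2}+4r-1)/4$ recovers exactly the two cases in the statement.

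The principal obstacle is the bookkeeping for $|\Omega_{r}|$, since the answer takes slightly different shapes in the two parities (one more $q^{2j-1}-1$ factor in the odd case) and the minus sign for odd $r$ must be traced cleanly back to $\eta(\epsilon)^{r}$ arising from the anisotropic block $\delta_{\epsilon}$; I would verify both parities against the low-rank checks $r=1,2,3$ to guard against a sign or off-by-one slip. By contrast, the quadratic Gauss sum step is completely standard in odd characteristic, and working with general nontrivial $\psi$ rather than the canonical $\lambda$ causes no issue, since for $\psi(x)=\lambda(ax)$ the substitution $B\mapsto a^{-1}B$ permutes $\Omega_{r}$ and reduces to $\psi=\lambda$.
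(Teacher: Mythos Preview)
Your argument is correct. The paper itself does not prove this proposition; it is quoted as a result from \cite{D1}, so there is no in-paper proof to compare against. That said, your route---factor the inner sum via the diagonal shape of $\delta_{\epsilon}$, evaluate each factor by the standard multivariate quadratic Gauss sum $\sum_{h}\psi({}^{t}hCh)=\eta(\det C)G^{r}$, observe that $\eta(\det B)^{2}$ cancels so the inner sum equals $(-1)^{r}q^{r}$ independently of $B$, and then multiply by $|\Omega_{r}|$ computed via orbit--stabilizer---is exactly the natural proof and matches what one finds in the cited source. Your exponent bookkeeping and the parity split for $|\Omega_{r}|$ are both right (and your proposed sanity checks at $r=1,2$ already pin down all the constants), so there is no gap.
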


\begin{proposition}\label{C}$($\cite{D2}$)$
Let $\psi$ be a nontrivial additive character of $\mathbb{F}_q$.
Then
\item
$(1)$
\begin{align*}
 \sum_{w\in SO^-(2,q)}\psi(Trw)=-K(\psi;1),
\end{align*}
$(2)$
\begin{align*}
 \sum_{w\in SO^-(2,q)}\psi(Tr\delta_1w)=q+1,
\end{align*}
$(3)$
\begin{align*}
 \sum_{i\in O^-(2,q)}\psi(Trw)=-K(\psi;1)+q+1~(cf.(\ref{a1})),
\end{align*}
where
\begin{align*}
\delta_1=\begin{bmatrix}
           1 & 0 \\
           0 & -1 \\
         \end{bmatrix}
.
\end{align*}
\end{proposition}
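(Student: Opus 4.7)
The plan is to exploit the explicit parametrization of $SO^-(2,q)$ given just after (\ref{a1}), writing a generic element as $w=\begin{bmatrix}a & b\epsilon\\ b & a\end{bmatrix}$ with $a^2-b^2\epsilon=1$. I would first dispose of part~(2): a direct matrix product shows $\delta_1 w=\begin{bmatrix}a & b\epsilon\\ -b & -a\end{bmatrix}$, hence $Tr(\delta_1 w)=0$ identically, and the sum collapses to $|SO^-(2,q)|=q+1$ (the order of the norm-one subgroup of $\mathbb{F}_{q^2}^*$). Part~(3) then follows immediately by combining (1) and (2) via the decomposition $O^-(2,q)=SO^-(2,q)\amalg\delta_1\cdot SO^-(2,q)$ in (\ref{a1}).

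The core of the argument is part~(1). Since $Tr\,w=2a$, grouping by $a$ turns the sum into
\[
\sum_{w\in SO^-(2,q)}\psi(Tr\,w)=\sum_{a\in\mathbb{F}_q}\psi(2a)\,N(a),
\]
where $N(a)=|\{b\in\mathbb{F}_q:b^2\epsilon=a^2-1\}|$. Letting $\chi$ denote the quadratic character of $\mathbb{F}_q^*$ (with $\chi(0)=0$), one checks $N(a)=1+\chi((a^2-1)/\epsilon)=1-\chi(a^2-1)$, using $\chi(\epsilon)=-1$; the formula also holds at $a=\pm1$ since then $\chi(a^2-1)=0$. Because $\sum_a\psi(2a)=0$ for a nontrivial $\psi$, the sum reduces to $-\sum_{a\in\mathbb{F}_q}\psi(2a)\chi(a^2-1)$.

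The remaining task is to identify this residual character sum with $K(\psi;1)$. After the substitution $a\mapsto-a$ (equivalently, using $2=-1$ in characteristic $3$), it becomes $\sum_{b\in\mathbb{F}_q}\psi(b)\chi(b^2-1)$. The key characteristic-$3$ coincidence is $4=1$, so the discriminant of $t^2-bt+1=0$ equals $b^2-4=b^2-1$; hence the number of $t\in\mathbb{F}_q^*$ with $t+t^{-1}=b$ is precisely $1+\chi(b^2-1)$. Substituting this into the definition of $K(\psi;1)$ and using $\sum_b\psi(b)=0$ yields
\[
K(\psi;1)=\sum_{b\in\mathbb{F}_q}\psi(b)\bigl(1+\chi(b^2-1)\bigr)=\sum_{b\in\mathbb{F}_q}\psi(b)\chi(b^2-1),
\]
which closes part~(1).

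The main obstacle I anticipate is precisely this last identification: without the characteristic-$3$ accident $4=1$, the natural Kloosterman parametrization yields $\chi(b^2-4)$ while the orthogonal-group side produces $\chi(b^2-1)$, and the two would not match. Once this coincidence is invoked the remainder of the proof is straightforward bookkeeping, and parts (2) and (3) require no new ideas beyond the coset decomposition in (\ref{a1}).
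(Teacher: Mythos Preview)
Your argument is correct in all three parts. The parametrization of $SO^-(2,q)$, the count $N(a)=1-\chi(a^2-1)$, and the identification of $\sum_b\psi(b)\chi(b^2-1)$ with $K(\psi;1)$ via the characteristic-$3$ coincidence $4=1$ are all sound; parts~(2) and~(3) are handled cleanly.

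As for comparison: the paper does not actually prove this proposition. It is quoted verbatim from \cite{D2} and used as a black box in the subsequent computations of $\sum_{w\in Q\sigma_rQ}\psi(Trw)$ and $\sum_{w\in\rho Q\sigma_rQ}\psi(Trw)$. So there is no ``paper's own proof'' to compare against here; your direct computation supplies what the present paper omits by citation.
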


Also, from Section 6 of \cite{D1}, it is shown that Gauss sum for
$O^-(2n,q)$, with $\psi$  a nontrivial additive character of
$\mathbb{F}_q$, is given by:
\begin{align*}
\begin{split}
\sum_{w\in O^-(2n,q)}\psi(Trw)&=\sum_{r=0}^{n-1}\sum_{w\in P\sigma_r Q}^{}\psi(Trw)\\
                              &=\sum_{r=0}^{n-1}\sum_{w\in Q\sigma_r Q}^{}\psi(Trw)
                              +\sum_{r=0}^{n-1}\sum_{w\in
                              \rho Q\sigma_rQ}^{}\psi(Trw) \quad (cf.~(\ref {b1}),(\ref
                              {c1})),
\end{split}
\end{align*}
with
\begin{align*}
\begin{split}
\sum_{w\in Q\sigma_r Q}^{}\psi(Trw)&= |B_r\setminus Q|\sum_{w \in
Q}\psi(Trw\sigma_r)\\                &=q^{(n-1)(n+2)/2}\sum_{i\in
                             SO^{-}(2,q)}\psi(Tri)\\&\times |B_r\setminus
                             Q|q^{r(n-r-3)}b_r(\psi)K_{GL(n-1-r,q)}(\psi;1),
\end{split}
\end{align*}
\begin{align*}
\begin{split}
\sum_{w\in \rho Q\sigma_r Q}^{}\psi(Trw)&= |B_r\setminus Q|\sum_{w
\in Q}\psi(Tr\rho w\sigma_r)\\ &=q^{(n-1)(n+2)/2}\sum_{i\in
                             SO^{-}(2,q)}\psi(Tr\delta_1 i)\\&\times |B_r\setminus
                             Q|q^{r(n-r-3)}b_r(\psi)K_{GL(n-1-r,q)}(\psi;1).
\end{split}
\end{align*}
Here one uses $(\ref {d1})$ and the fact that $\rho^{-1}w\rho \in
Q$, for all $w\in Q$.

Now, we see from $(\ref{e1})$ and Propositions 2 and 3 that, for
each $r$ with $0\leq r \leq n-1$,

\begin{equation}\label{q1}
\begin{split}
\sum_{w \in Q\sigma_r Q}\psi(Trw)=q^{(n-1)(n+2)/2} {\Big[
                                                      \begin{array}{c}
                                                        n-1 \\
                                                        r \\
                                                      \end{array}
                                                    \Big]
}_q
K(\psi;1)K_{GL(n-1-r,q)}(\psi;1)\\ \quad \quad \quad \quad \times
\begin{cases}
-q^{rn- \frac{1}{4}r^2}\prod_{j=1}^{r/2}(q^{2j-1}-1),& \text {for t even,}\\
q^{rn- \frac{1}{4}(r+1)^2}\prod_{j=1}^{(r+1)/2}(q^{2j-1}-1),& \text
{for t odd,}
\end{cases}
\end{split}
\end{equation}

\begin{equation}\label{r1}
\begin{split}
\sum_{w \in \rho Q\sigma_r Q}\psi(Trw)=(q+1)q^{(n-1)(n+2)/2} {\Big[
                                                      \begin{array}{c}
                                                        n-1 \\
                                                        r \\
                                                      \end{array}
                                                    \Big]}_q
K_{GL(n-1-r,q)}(\psi;1)\\ \quad \quad \quad \quad \times
\begin{cases}
q^{rn- \frac{1}{4}r^2}\prod_{j=1}^{r/2}(q^{2j-1}-1),& \text {for t even,}\\
-q^{rn- \frac{1}{4}(r+1)^2}\prod_{j=1}^{(r+1)/2}(q^{2j-1}-1),& \text
{for t odd.}
\end{cases}
\end{split}
\end{equation}
 For our purposes, we need the following special cases of exponential
 sums in $(\ref {q1})$ and $(\ref {r1})$. We state them separately
 as a theorem.

\begin{theorem}\label{D}
Let $\psi$ be a nontrivial additive character of $\mathbb{F}_q$.
Then, in the notations of $(\ref {c})$, $(\ref {e})$, $(\ref {g})$,
$(\ref {i})$, $(\ref {k})$, $(\ref {m})$, $(\ref {o})$, and $(\ref
{q})$, we have
\begin{align*}
\begin{split}
\sum_{w\in DC_{i}^{\pm}(n,q)}\psi(Trw)&={\pm}A_{i}^{\pm}(n,q)K(\psi;1),\quad \text{~for~} i=1,3,\\
\sum_{w\in DC_{2}^{\pm}(n,q)}\psi(Trw)&={\pm}(-1)A_{2}^{\pm}(n,q)K(\psi;1)^2, \\
\sum_{w\in
DC_{4}^{\pm}(n,q)}\psi(Trw)&={\pm}(-1)q^{-1}A_{4}^{\pm}(n,q)K_{GL(2,q)}(\psi;1)\\
                            &={\pm}(-1)A_{4}^{\pm}(n,q)(K(\psi;1)^2+q^2-q)\\
\end{split}
\end{align*}
(cf. (\ref{g1})-(\ref{n1}),(\ref{p1})).
\end{theorem}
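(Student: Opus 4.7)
The plan is to derive each of the four identities in Theorem \ref{D} as a specialization of the general double-coset sum formulas $(\ref{q1})$ and $(\ref{r1})$ to the appropriate value of $r$. From the definitions $(\ref{g1})$--$(\ref{n1})$, we take $r=n-1$ for $i=1$, $r=n-2$ for $i=2,3$, and $r=n-3$ for $i=4$. We use $(\ref{q1})$ when the coset is $Q\sigma_{r}Q$ (so $i=1,2$) and $(\ref{r1})$ when it is $\rho Q\sigma_{r}Q$ (so $i=3,4$); in the first case the factor coming from $SO^{-}(2,q)$ is $-K(\psi;1)$ by Proposition \ref{C}(1), whereas in the second it is $q+1$ by Proposition \ref{C}(2).

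First I would treat $i=1$. With $r=n-1$, one has $K_{GL(0,q)}(\psi;1)=1$ and the $q$-binomial coefficient is $1$, so $(\ref{q1})$ collapses to a product of $K(\psi;1)$ with a closed-form scalar. The parity branch is selected by $r=n-1$: for $n$ even ($+$ case) the ``odd $r$'' branch of $(\ref{q1})$ applies, giving a plus sign, while for $n$ odd ($-$ case) the ``even $r$'' branch gives a minus sign. It then remains to simplify the collected exponent of $q$ and the product of $(q^{2j-1}-1)$-factors, and match them against $A_{1}^{\pm}(n,q)$ in $(\ref{c})$, $(\ref{k})$. The case $i=3$ is parallel, using $(\ref{r1})$ with $r=n-2$; here $K_{GL(1,q)}(\psi;1)=K(\psi;1)$ supplies the Kloosterman factor, while the $(q+1)$ already present in $(\ref{r1})$ accounts for the prefactor $q+1$ in $A_{3}^{\pm}(n,q)$.

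For $i=2$ I would use $(\ref{q1})$ with $r=n-2$: the sum carries one $K(\psi;1)$ from Proposition \ref{C}(1) and a second $K(\psi;1)$ from $K_{GL(1,q)}(\psi;1)$, producing the claimed $K(\psi;1)^{2}$; signs again track the parity of $n-2$. For $i=4$, the essential new input is that with $r=n-3$ the factor $K_{GL(n-1-r,q)}(\psi;1)$ becomes $K_{GL(2,q)}(\psi;1)$, which the recursion $(\ref{p1})$ unfolds as
\begin{equation*}
K_{GL(2,q)}(\psi;1)=qK(\psi;1)^{2}+q^{2}(q-1),
\end{equation*}
so $q^{-1}K_{GL(2,q)}(\psi;1)=K(\psi;1)^{2}+q^{2}-q$. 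This simultaneously explains the factor $q^{-1}$ in the first equality of the $DC_{4}^{\pm}$ line of Theorem \ref{D} and the equivalent second form.

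The main obstacle is not conceptual but the bookkeeping of exponents and signs. One must combine the exponent $(n-1)(n+2)/2$ coming from $|Q|/|B_{r}|$-type factors with the branch exponent $rn-r^{2}/4$ or $rn-(r+1)^{2}/4$ from Proposition \ref{B}, absorb the $q$-binomial coefficient, and then reassemble the resulting product of $(q^{2j-1}-1)$-terms to match the definitions $(\ref{c})$--$(\ref{r})$ of $A_{i}^{\pm}(n,q)$. A related subtlety is sign tracking: since $r$ has a fixed shift from $n$, the parity branch of Proposition \ref{B} switches as $n$'s parity switches, and this must be reconciled with the convention implicit in $A_{i}^{\pm}(n,q)$ that $+$ labels the even-$n$ case and $-$ the odd-$n$ case, together with the extra sign $-1$ visible in the $i=2,4$ entries of the theorem.
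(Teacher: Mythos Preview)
Your proposal is correct and follows exactly the route the paper takes: the paper introduces Theorem~\ref{D} with the words ``For our purposes, we need the following special cases of exponential sums in $(\ref{q1})$ and $(\ref{r1})$,'' and leaves the specialization to the reader. Your identification of the relevant $r$, the choice between $(\ref{q1})$ and $(\ref{r1})$, the parity branches, and the use of $(\ref{p1})$ to unfold $K_{GL(2,q)}(\psi;1)$ are all on target; the remaining work is, as you say, pure bookkeeping of exponents and products to match the $A_i^{\pm}(n,q)$ of $(\ref{c})$--$(\ref{r})$.
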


\begin{corollary}\label{E}
Let $\lambda$ be the canonical additive character of $\mathbb{F}_q$,
and let $a\in \mathbb{F}_q^{*}$. Then we have
\begin{align}\label{s1}
\begin{split}
\sum_{w\in DC_{i}^{\pm}(n,q)}\lambda(aTrw)&={\pm}A_{i}^{\pm}(n,q)K(\lambda;a^2),
\quad \text{~for~} i=1,3,\\
\end{split}
\end{align}
\begin{align}\label{t1}
\begin{split}
\sum_{w\in DC_{2}^{\pm}(n,q)}\lambda(aTrw)&={\pm}(-1)A_{2}^{\pm}(n,q)K(\lambda;a^2)^2, \\
\end{split}
\end{align}
\begin{align}\label{u1}
\begin{split}
\sum_{w\in DC_{4}^{\pm}(n,q)}\lambda(aTrw)
&={\pm}(-1)A_{4}^{\pm}(n,q)(K(\lambda;a^2)^2+q^2-q).
\end{split}
\end{align}
\end{corollary}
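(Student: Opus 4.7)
The plan is to apply Theorem \ref{D} to the specific nontrivial additive character $\psi$ defined by $\psi(x) := \lambda(ax)$, which is indeed nontrivial because $a \in \mathbb{F}_q^{*}$, and then to convert $K(\psi;1)$ into $K(\lambda;a^2)$ by a change of variables on $\mathbb{F}_q^{*}$. First, since $\lambda(a\operatorname{Tr} w) = \psi(\operatorname{Tr} w)$, the three identities of Theorem \ref{D} immediately yield
\begin{align*}
\sum_{w\in DC_{i}^{\pm}(n,q)}\lambda(a\operatorname{Tr} w) &= \pm A_i^{\pm}(n,q)\,K(\psi;1)\quad (i=1,3),\\
\sum_{w\in DC_{2}^{\pm}(n,q)}\lambda(a\operatorname{Tr} w) &= \mp A_2^{\pm}(n,q)\,K(\psi;1)^2,\\
\sum_{w\in DC_{4}^{\pm}(n,q)}\lambda(a\operatorname{Tr} w) &= \mp A_4^{\pm}(n,q)\bigl(K(\psi;1)^2 + q^2 - q\bigr).
\end{align*}

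Second, I would verify the key identity $K(\psi;1)=K(\lambda;a^2)$. By definition,
\begin{equation*}
K(\psi;1) = \sum_{\alpha\in\mathbb{F}_q^{*}}\psi(\alpha+\alpha^{-1}) = \sum_{\alpha\in\mathbb{F}_q^{*}}\lambda(a\alpha + a\alpha^{-1}).
\end{equation*}
Since $a\neq 0$, the map $\alpha\mapsto a^{-1}\alpha$ is a bijection of $\mathbb{F}_q^{*}$; applying it sends $a\alpha\mapsto \alpha$ and $a\alpha^{-1}\mapsto a^2\alpha^{-1}$, giving $K(\psi;1) = \sum_{\alpha\in\mathbb{F}_q^{*}}\lambda(\alpha + a^2\alpha^{-1}) = K(\lambda;a^2)$. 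Plugging this into the three displays above yields exactly (\ref{s1})--(\ref{u1}).

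There is essentially no obstacle here: the corollary is a purely formal specialization of Theorem \ref{D}, and the only substantive step is the elementary change of variables converting $K(\psi;1)$ into $K(\lambda;a^2)$. Note that in the $DC_4^{\pm}$ case, the expression $K(\psi;1)^2+q^2-q$ already reflects the use of the recursion (\ref{p1}) at $t=2$ inside the statement of Theorem \ref{D} to rewrite $q^{-1}K_{GL(2,q)}(\psi;1)$, so no further identity for $K_{GL(2,q)}$ is needed at this stage.
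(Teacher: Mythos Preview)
Your proposal is correct and is exactly the intended argument: the paper states Corollary~\ref{E} without proof precisely because it is the immediate specialization of Theorem~\ref{D} to the character $\psi(x)=\lambda(ax)$ together with the change of variables $\alpha\mapsto a^{-1}\alpha$ giving $K(\psi;1)=K(\lambda;a^2)$. Nothing further is needed.
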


\begin{proposition}\label{F} $($$[7,(5.3$-$5)]$$)$
Let $\lambda$ be the canonical additive character of $\mathbb{F}_q$,
 $m\in \mathbb{Z}_{\geq 0}$, $\beta\in \mathbb{F}_{q}^{}$. Then
\begin{align}\label{v1}
\begin{split}
\sum_{a\in \mathbb{F}_{q}^{*}}\lambda(-a\beta)K(\lambda;a^2)^m=
q\delta(m,q;\beta)-(q-1)^m,
\end{split}
\end{align}
where, for $m\geq1$,
\begin{align}\label{w1}
\begin{split}
\delta(m,q;\beta)=|\{(\alpha_{1},\cdots,\alpha_{m})\in
(\mathbb{F}_{q}^{*})^m|\alpha_{1}+\alpha_{1}^{-1}+\cdots+\alpha_{m}
+\alpha_{m}^{-1}=\beta\}|,
\end{split}
\end{align}
and
\begin{align*}
\begin{split}
\delta(0,q;\beta)=
\begin{cases}
1, ~\text{if}~ \beta=0,\\ 0, ~\text{otherwise}.
\end{cases}
\end{split}
\end{align*}
\end{proposition}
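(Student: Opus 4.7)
The plan is to linearize the Kloosterman sum $K(\lambda;a^2)$ in $a$ and then interchange the order of summation, reducing the outer sum to an orthogonality sum of additive characters. Starting from the definition $K(\lambda;a^2)=\sum_{\alpha\in\mathbb{F}_q^*}\lambda(\alpha+a^2\alpha^{-1})$, I would first perform the bijective substitution $\alpha\mapsto a\alpha$ on $\mathbb{F}_q^*$ (valid since $a\in\mathbb{F}_q^*$) to get $K(\lambda;a^2)=\sum_{\alpha\in\mathbb{F}_q^*}\lambda(a(\alpha+\alpha^{-1}))$. This is the crucial observation: although $K(\lambda;a)$ is genuinely quadratic-looking in the parameter, when the parameter is a perfect square the whole exponent becomes a scalar multiple of $a$.

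Raising to the $m$-th power gives
\begin{equation*}
K(\lambda;a^2)^{m}=\sum_{(\alpha_1,\dots,\alpha_m)\in(\mathbb{F}_q^*)^m}\lambda\!\left(a\sum_{j=1}^{m}(\alpha_j+\alpha_j^{-1})\right).
\end{equation*}
Multiplying by $\lambda(-a\beta)$ and summing over $a\in\mathbb{F}_q^*$, I would then interchange the two summations to obtain
\begin{equation*}
\sum_{a\in\mathbb{F}_q^*}\lambda(-a\beta)K(\lambda;a^2)^{m}=\sum_{(\alpha_1,\dots,\alpha_m)}\;\sum_{a\in\mathbb{F}_q^*}\lambda\!\left(a\Bigl(\textstyle\sum_{j}(\alpha_j+\alpha_j^{-1})-\beta\Bigr)\right).
\end{equation*}

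Next, I would evaluate the inner character sum by orthogonality: it equals $q-1$ when $\sum_j(\alpha_j+\alpha_j^{-1})=\beta$, and equals $-1$ otherwise (since $\sum_{a\in\mathbb{F}_q}\lambda(ax)=0$ for $x\neq0$). By the definition of $\delta(m,q;\beta)$ in $(\ref{w1})$, the first case occurs for exactly $\delta(m,q;\beta)$ tuples out of $(q-1)^m$. Splitting the outer sum accordingly produces
\begin{equation*}
(q-1)\delta(m,q;\beta)-\bigl((q-1)^{m}-\delta(m,q;\beta)\bigr)=q\delta(m,q;\beta)-(q-1)^{m},
\end{equation*}
which is the desired identity. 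The edge case $m=0$ is handled separately: the left-hand side becomes $\sum_{a\in\mathbb{F}_q^*}\lambda(-a\beta)$, equal to $q-1$ when $\beta=0$ and $-1$ otherwise, matching $q\delta(0,q;\beta)-1$ under the stated convention.

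There is no real obstacle to this argument; it is a routine manipulation of character sums. The only point requiring mild care is recognizing that the substitution $\alpha\mapsto a\alpha$ linearizes $K(\lambda;a^2)$ in $a$—this is precisely what makes the ``square argument'' incomplete moments tractable by the same orthogonality technique that handles $\sum_a\lambda(-a\beta)K(\lambda;a)^m$. Once that is in place, Fubini and orthogonality finish the job cleanly.
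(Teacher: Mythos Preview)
Your argument is correct. Note, however, that the paper does not actually supply its own proof of this proposition: it is quoted from an external reference (\cite{D4}, equations (5.3)--(5.5)). The closest the paper comes is Lemma~\ref{M}, which establishes the Fourier-dual identity $\sum_{\beta}\delta(m,q;\beta)\lambda(a\beta)=K(\lambda;a^{2})^{m}$ by the very same device you use---expanding via characters, interchanging sums, and applying the substitution $x_j\mapsto a^{-1}x_j$ to linearize the $a^{2}$-dependence. So your proof is not merely correct but is exactly the argument one would extract from the paper's own treatment of the companion lemma; the two identities are inverse Fourier transforms of one another over $\mathbb{F}_q$, and both are handled by the same substitution-plus-orthogonality routine.
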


\begin{remark}\label{G}
Here one notes that
\begin{equation}\label{x1}
\begin{split}
\delta(1,q;\beta)&=|\{x\in \mathbb {F}_{q}|x^2-\beta x+1=0\}|\\ &=
\begin{cases}
2, ~\text {~if~} \beta^2-1 \neq 0  ~\text {is a square},\\
1, ~\text {~if~} \beta = {\pm 1},\\
0, ~\text {~if~} \beta^2-1 ~\text {is a nonsquare}.
\end{cases}
\end{split}
\end{equation}
\end{remark}

In the following lemma, $q$ is not just a power of 3 but a power of
any prime.

\begin{lemma}\label{H}
For any $\beta$,
\begin{equation}\label{y1}
\delta(2,q;\beta)\leq
\begin{cases}
2q-4, ~\text {~if~char~$\mathbb {F}_{q}\neq 2$},\\
2q-3, ~\text {~if~char~$\mathbb {F}_{q}= 2$}.
\end{cases}
\end{equation}
\end{lemma}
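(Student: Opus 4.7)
The plan is to recognize $\delta(2,q;\beta)$ as a convolution of $n(t):=\delta(1,q;t)$ with itself, decompose $\mathbb{F}_q$ according to the value of $n$, and then apply a short double-counting inequality.

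Summing first over $\alpha_1$ and regrouping by $t=\alpha_1+\alpha_1^{-1}$ gives
\[
\delta(2,q;\beta)=\sum_{t\in\mathbb{F}_q}n(t)\,n(\beta-t).
\]
By (\ref{x1}), in odd characteristic $n$ takes only the values $0$, $1$, $2$, with $n(t)=1$ iff $t=\pm 2$ (these coincide with $\pm 1$ in characteristic $3$, where $4=1$). In characteristic $2$ a direct inspection of $x^2+tx+1$ gives the same trichotomy: the polynomial is inseparable only at $t=0$, where $x=1$ is the unique root, so $n(0)=1$, while for $t\neq 0$ the polynomial is separable with either $0$ or $2$ roots in $\mathbb{F}_q$. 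Thus $\mathbb{F}_q=T_0\cup T_1\cup T_2$ with $T_i:=\{t:n(t)=i\}$, and
\[
|T_1|=\begin{cases}2,&\text{if char }\mathbb{F}_q\neq 2,\\ 1,&\text{if char }\mathbb{F}_q=2.\end{cases}
\]
Because each $\alpha\in\mathbb{F}_q^*$ contributes exactly one unit to $\sum_t n(t)$, we have $\sum_t n(t)=q-1$, i.e. $2|T_2|+|T_1|=q-1$.

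Now set $A:=\sum_{t\in T_2}n(\beta-t)$, $B:=\sum_{t\in T_1}n(\beta-t)$, and $C:=\sum_{t\in T_0}n(\beta-t)$. The convolution identity becomes $\delta(2,q;\beta)=2A+B$, and the change of variable $s=\beta-t$ shows $A+B+C=q-1$. Combining the trivial bound $A\leq 2|T_2|$ with $C\geq 0$ (hence $A+B\leq q-1$) yields the decisive chain
\[
\delta(2,q;\beta)=2A+B=A+(A+B)\leq 2|T_2|+(q-1)=2q-2-|T_1|,
\]
and substituting the two values of $|T_1|$ gives exactly the bounds claimed in the lemma.

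The only obstacle is that (\ref{x1}) is stated only in the characteristic-$3$ setting used throughout the rest of the paper, so the characteristic-$2$ case of the trichotomy for $n$ (in particular the identification $T_1=\{0\}$ and therefore $|T_1|=1$) must be verified separately by hand. Once this preparatory combinatorial input is in place, the proof reduces to the pure two-line double-counting inequality above; no character-sum estimate or deeper input is needed, and the bound is in fact sharp, as one can check already at $q=3$ with $\beta=0$ and at $q=4$ with $\beta=0$.
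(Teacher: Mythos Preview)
Your argument is correct and genuinely different from the paper's. The paper proceeds in two steps: first it writes
\[
\delta(2,q;\beta)=q^{-1}\sum_{\alpha\in\mathbb{F}_q}\lambda(-\alpha\beta)\Bigl|\sum_{x\in\mathbb{F}_q^*}\lambda(\alpha(x+x^{-1}))\Bigr|^{2},
\]
a Fourier inversion with nonnegative coefficients, and applies the triangle inequality to conclude $\delta(2,q;\beta)\le\delta(2,q;0)$; second, it computes $\delta(2,q;0)$ exactly by factoring $\alpha_1+\alpha_2+\alpha_1^{-1}+\alpha_2^{-1}=0$ as $(\alpha_1\alpha_2+1)(\alpha_1+\alpha_2)=0$ and doing inclusion--exclusion on the two factors. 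Your route instead stays entirely on the combinatorial side: you write $\delta(2,q;\beta)$ as the self-convolution $\sum_t n(t)\,n(\beta-t)$, stratify $\mathbb{F}_q$ by the fibre sizes of $x\mapsto x+x^{-1}$, and finish with the two-line inequality $2A+B=A+(A+B)\le 2|T_2|+(q-1)$. No characters are needed, and the same chain of inequalities, applied at $\beta=0$ using the evenness $n(-t)=n(t)$, recovers equality and hence the exact value of $\delta(2,q;0)$ that the paper computes separately. The trade-off is that the paper's Fourier step $\delta(2,q;\beta)\le\delta(2,q;0)$ extends verbatim to $\delta(2m,q;\beta)\le\delta(2m,q;0)$ for all $m$, whereas your convolution bound is tailored to the case $m=2$; on the other hand, your argument is self-contained and avoids any analytic input. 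Your handling of the characteristic-$2$ case (identifying $T_1=\{0\}$ from the inseparability of $x^2+1$) is also correct and fills the small gap you noted about (\ref{x1}).
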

\begin{proof}
Firstly, we show that $\delta(2,q;\beta) \leq \delta(2,q;0)$ for any
$\beta$. Observe that
\begin{align*}
\begin{split}
\delta(2,q;\beta)=|\{(\alpha_{1},\alpha_{2})\in
\mathbb{F}_{q}^{2}|\alpha_{1}-\alpha_{2}+\alpha_{1}^{-1}-\alpha_{2}^{-1}=\beta\}|
\end{split}
\end{align*}
Then, borrowing an idea from \cite {SP1}, we have
\begin{align*}
\begin{split}
\delta(2,q;\beta)
&=q^{-1}\sum_{\alpha \in \mathbb {F}_q}\lambda(-\alpha\beta)
\sum_{{\alpha_1}^{}\in \mathbb {F}_{q}^{*}}\lambda(\alpha(\alpha_{1}^{}+\alpha_{1}^{-1}))
\sum_{{\alpha_2}^{}\in \mathbb {F}_{q}^{*}}\lambda(-\alpha(\alpha_{2}^{}+\alpha_{2}^{-1}))\\
&=q^{-1}\sum_{\alpha \in \mathbb {F}_q}\lambda(-\alpha\beta)
|\sum_{x\in \mathbb {F}_{q}^{*}}\lambda(\alpha(x_{}^{}+x_{}^{-1}))|^2\\
&\leq q^{-1}\sum_{\alpha \in \mathbb {F}_q}\sum_{x \in \mathbb
{F}_q^{*}}
|\lambda(\alpha(x_{}^{}+x_{}^{-1}))|^2\\
&=\delta(2,q;0).
\end{split}
\end{align*}
Here, for any prime power $q$,  $\lambda$  is the canonical additive
character of $\mathbb {F}_{q}$.

Secondly, we show that
\begin{equation}\label{z1}
\delta(2,q;0)=
\begin{cases}
2q-4, ~\text {~if~char~$\mathbb {F}_{q}\neq 2$},\\
2q-3, ~\text {~if~char~$\mathbb {F}_{q}= 2$}.
\end{cases}
\end{equation}
We see, by multiplying the equation
$\alpha_{1}^{}+\alpha_{2}^{}+\alpha_{1}^{-1}+\alpha_{2}^{-1}=0$ by
$\alpha_{1}^{}\alpha_{2}^{}$, that
\begin{align}\label{a2}
\begin{split}
\delta(2,q;0)=|\{(\alpha_{1},\alpha_{2})\in
\mathbb{F}_{q}^{2}|(\alpha_{1}\alpha_{2}+1)(\alpha_{1^{}}+\alpha_{2}^{})=0\}|-1,
\end{split}
\end{align}
and
\begin{align}\label{b2}
\begin{split}
\{(\alpha_{1},\alpha_{2})\in
\mathbb{F}_{q}^{2}|(\alpha_{1}\alpha_{2}+1)(\alpha_{1^{}}+\alpha_{2}^{})=0\}=A\cup
B,
\end{split}
\end{align}
with
\begin{align}\label{c2}
\begin{split}
A=\{(\alpha_{1},\alpha_{2})\in
\mathbb{F}_{q}^{2}|\alpha_{1}\alpha_{2}+1=0\},
B=\{(\alpha_{1},\alpha_{2})\in
\mathbb{F}_{q}^{2}|\alpha_{1}+\alpha_{2}=0\}.
\end{split}
\end{align}
Note here that $|A|=q-1$, and $|B|=q$.

Further, $A\cap B = \{{\pm}(1,-1)\}$, so that
\begin{equation}\label{d2}
|A\cap B|=
\begin{cases}
2, ~\text {~if~char~$\mathbb {F}_{q}\neq 2$},\\
1, ~\text {~if~char~$\mathbb {F}_{q}= 2$}.
\end{cases}
\end{equation}
From (\ref {a2})-(\ref {d2}), we get the result in (\ref {z1}).
\end{proof}

\begin{remark}\label{I}
We have shown in \cite{D7} that, for $char ~\mathbb {F}_{q}=2$,
\begin{equation*}
\delta(2,q;\beta)=
\begin{cases}
2q-3, ~\text {~if~} \beta=0,\\
K(\lambda;\beta^{-1})+q-3, ~\text {~if~} \beta \neq 0.
\end{cases}
\end{equation*}
For any integer $r$ with $0 \leq r \leq n-1$, and each $\beta \in
\mathbb {F}_{q}$, we let
\begin{equation*}
\begin{split}
&N_{Q\sigma_{r}Q}(\beta)=|\{w\in Q\sigma_{r}Q|Trw=\beta\}|,\\
&N_{\rho Q\sigma_{r}Q}(\beta)=|\{w\in \rho
Q\sigma_{r}Q|Trw=\beta\}|.
\end{split}
\end{equation*}
Then it is easy to see that
\begin{equation*}
\begin{split}
&qN_{Q\sigma_{r}Q}(\beta)=|Q\sigma_{r}Q|+\sum_{a \in
\mathbb{F}_{q}^{*}}\lambda(-a\beta)
\sum_{w \in {Q\sigma_rQ}}\lambda(aTrw),\\
&qN_{\rho Q\sigma_{r}Q}(\beta)=|\rho Q\sigma_{r}Q|+\sum_{a \in
\mathbb{F}_{q}^{*}}\lambda(-a\beta) \sum_{w \in {\rho
Q\sigma_rQ}}\lambda(aTrw).
\end{split}
\end{equation*}
\end{remark}
Now, from (\ref {g1})-(\ref {o1}) and (\ref {s1})-(\ref {v1}), we
have the following result.

\begin{proposition}\label{J}
$(1)$ For $~i=1,3,~$
\begin{equation}\label{e2}
\begin{split}
N_{DC_{i}^{\pm}(n,q)}(\beta)&=q^{-1}A_{i}^{\pm}(n,q)B_{i}^{\pm}(n,q)
{\pm}q^{-1}A_{i}^{\pm}(n,q)(q\delta(1,q;\beta)-q+1)\\
&=q^{-1}A_{i}^{\pm}(n,q)B_{i}^{\pm}(n,q)
{\pm}q^{-1}A_{i}^{\pm}(n,q)\\
&\quad\quad\times
\begin{cases}
q+1,& \text {if $\beta^2-1\neq 0$ is a square,}\\
1,& \text {if $\beta={\pm}1$},\\
-q+1,& \text {if $\beta^2-1$ is a nonsquare.}
\end{cases}
\end{split}
\end{equation}
$(2)$  $\qquad$ $N_{DC_{2}^{\pm}(n,q)}(\beta)$
\begin{equation}\label{f2}
\begin{split}
=q^{-1}A_{2}^{\pm}(n,q)B_{2}^{\pm}(n,q)
{\pm}(-1)q^{-1}A_{2}^{\pm}(n,q)\{q\delta(2,q;\beta)-(q-1)^2\}.
\end{split}
\end{equation}
$(3)$ $\qquad$
$N_{DC_{4}^{\pm}(n,q)}(\beta)=q^{-1}A_{4}^{\pm}(n,q)B_{4}^{\pm}(n,q)
{\pm}(-1)q^{-1}A_{4}^{\pm}(n,q)\\ $
\begin{equation}\label{g2}
\begin{split}
&\qquad\qquad\qquad\times
\begin{cases}
q\delta(2,q;\beta)-2q^2+3q-1,& \text {if $\beta\neq0$,}\\
q\delta(2,q;\beta)+q^3-3q^2+3q-1,& \text {if $\beta=0$.}
\end{cases}
\end{split}
\end{equation}
Here
$\delta(2,q;\beta)=|\{(\alpha_{1}^{},\alpha_{2}^{})\in(\mathbb{F}_{q}^{*})^2|\alpha_{1}^{}+
\alpha_{1}^{-1}+\alpha_{2}^{}+\alpha_{2}^{-1}=\beta\}|$.
\end{proposition}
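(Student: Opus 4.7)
The plan is to derive each of the three formulas directly from the orthogonality relation recorded at the end of Remark \ref{G} (more precisely, the displayed identity in Remark \ref{I}), combined with the explicit exponential sum evaluations in Corollary \ref{E} and the summation formula of Proposition \ref{F}. Since $DC_i^{\pm}(n,q)$ is either $Q\sigma_rQ$ or $\rho Q\sigma_rQ$ for appropriate $r$, Remark \ref{I} gives in every case
\begin{equation*}
qN_{DC_i^{\pm}(n,q)}(\beta)=|DC_i^{\pm}(n,q)|+\sum_{a\in\mathbb{F}_q^{*}}\lambda(-a\beta)\sum_{w\in DC_i^{\pm}(n,q)}\lambda(a\,Trw),
\end{equation*}
and (\ref{o1}) identifies the first term as $A_i^{\pm}(n,q)B_i^{\pm}(n,q)$. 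So everything is reduced to evaluating the inner $a$-sum.

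For part (1), I would substitute (\ref{s1}) from Corollary \ref{E} to get the inner sum equal to $\pm A_i^{\pm}(n,q)\sum_{a\in\mathbb{F}_q^{*}}\lambda(-a\beta)K(\lambda;a^2)$, then apply Proposition \ref{F} with $m=1$ to turn this into $\pm A_i^{\pm}(n,q)(q\delta(1,q;\beta)-(q-1))$. Dividing through by $q$ produces the first displayed equality in (\ref{e2}), and the case-by-case expression follows immediately from the evaluation of $\delta(1,q;\beta)$ given in (\ref{x1}) of Remark \ref{G}. Part (2) is entirely analogous, using (\ref{t1}) instead of (\ref{s1}) and Proposition \ref{F} with $m=2$, which yields the formula (\ref{f2}) without any case distinction.

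Part (3) is the only step requiring slight care, because (\ref{u1}) contains the extra additive constant $q^2-q$. After plugging in, the inner $a$-sum splits as
\begin{equation*}
\pm(-1)A_4^{\pm}(n,q)\Bigl\{\sum_{a\in\mathbb{F}_q^{*}}\lambda(-a\beta)K(\lambda;a^2)^2+(q^2-q)\sum_{a\in\mathbb{F}_q^{*}}\lambda(-a\beta)\Bigr\}.
\end{equation*}
The first sum is again handled by Proposition \ref{F} with $m=2$, while the second is the standard character sum, equal to $q-1$ when $\beta=0$ and to $-1$ when $\beta\neq0$. Combining these and collecting terms gives, in the two cases, $q\delta(2,q;\beta)-(q-1)^2\pm(q^2-q)(q-1)$, which one checks simplifies to $q\delta(2,q;\beta)+(q-1)^3=q\delta(2,q;\beta)+q^3-3q^2+3q-1$ for $\beta=0$ and to $q\delta(2,q;\beta)-2q^2+3q-1$ for $\beta\neq 0$. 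This matches (\ref{g2}) after dividing by $q$.

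The only genuinely non-mechanical point is the bookkeeping in part (3), where one must carefully split the geometric-series character sum according to whether $\beta$ vanishes and then verify that $-(q-1)^2-(q^2-q)=-(2q^2-3q+1)$ and $-(q-1)^2+(q^2-q)(q-1)=(q-1)^3$; no further ingredients are needed.
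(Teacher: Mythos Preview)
Your proposal is correct and follows exactly the approach the paper itself indicates: the paper simply writes ``Now, from (\ref{g1})--(\ref{o1}) and (\ref{s1})--(\ref{v1}), we have the following result,'' and your argument is precisely the fleshed-out version of that one-line reference, combining the orthogonality relation in Remark~\ref{I}, Corollary~\ref{E}, and Proposition~\ref{F}. One small slip: in your part~(3) the intermediate expression ``$q\delta(2,q;\beta)-(q-1)^2\pm(q^2-q)(q-1)$'' is not quite right for $\beta\neq0$ (the second character sum contributes $-(q^2-q)$, not $-(q^2-q)(q-1)$), but your subsequent verification ``$-(q-1)^2-(q^2-q)=-(2q^2-3q+1)$'' and the final formulas are correct, so this is just a typo in the display rather than a gap.
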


\begin{corollary}\label{K}
\item\label{Ka}
$(1)$ For all even $n\geq 2$ and all $q$,
$N_{DC_{i}^{+}(n,q)}(\beta)>0$, for all $\beta$ and $i=1,2.$
\item\label{Kb}
$(2)$ For all even $n\geq 4$ and all $q$,
$N_{DC_{3}^{+}(n,q)}(\beta)>0$, for all $\beta$; for $n=2$  and  all
$q$,
\begin{equation}\label{h2}
\begin{split}
N_{DC_{3}^{+}(2,q)}(\beta)&=q^2(q+1)\delta(1,q;\beta)\\ &=
\begin{cases}
2q^3+2q^2, ~\text {~if~} \beta^2-1 \neq 0  ~\text {is a square},\\
q^3+q^2, ~\text {~if~} \beta = {\pm 1},\\
0, ~\text {~if~} \beta^2-1 ~\text {is a nonsquare}.
\end{cases}
\end{split}
\end{equation}
\item\label{Kc}
$(3)$ For all even $n\geq 4$ and all $q$,
$N_{DC_{4}^{+}(n,q)}(\beta)>0$, for all $\beta$.
\item\label{Kd}
$(4)$ For all odd $n\geq 3$ and all $q$,
$N_{DC_{1}^{-}(n,q)}(\beta)>0$, for all $\beta$; for $n=1$  and  all
$q$,
\begin{equation}\label{i2}
\begin{split}
N_{DC_{1}^{-}(1,q)}(\beta)&=2-\delta(1,q;\beta)\\ &=
\begin{cases}
0, ~\text {~if~} \beta^2-1 \neq 0  ~\text {is a square},\\
1, ~\text {~if~} \beta = {\pm 1},\\
2, ~\text {~if~} \beta^2-1 ~\text {is a nonsquare}.
\end{cases}
\end{split}
\end{equation}
\item\label{Ke}
$(5)$ For all odd $n\geq 3$ and all $q$,
$N_{DC_{i}^{-}(n,q)}(\beta)>0$, for all $\beta$ and  $i=2,3,4.$
\end{corollary}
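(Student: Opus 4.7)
The plan is to derive every assertion directly from the explicit counts in Proposition~\ref{J} and reduce positivity to comparing $B_i^{\pm}(n,q)$ with a small polynomial in $q$.

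For the $i=1,3$ cases in parts (1), (2), (4), I would substitute the three possible values of $\delta(1,q;\beta)$ from (\ref{x1}) into (\ref{e2}). With $+$ signs, positivity of $N_{DC_i^+(n,q)}(\beta)$ reduces to $B_i^+(n,q)>q-1$ (worst case: $\beta^2-1$ a nonsquare); with $-$ signs the critical inequality is $B_i^-(n,q)>q+1$ (worst case: $\beta^2-1\neq 0$ a square). Using the defining formulas (\ref{d}), (\ref{h}), (\ref{l}), the smallest admissible $n$ beats these thresholds, and $B_i^{\pm}(n,q)$ is clearly monotone in $n$. The exceptional cases in parts (2) and (4) occur exactly at the boundary values $B_3^+(2,q)=q-1$ and $B_1^-(1,q)=q+1$, where the critical factor vanishes for the worst $\beta$; here one reads off (\ref{h2}) and (\ref{i2}) directly from (\ref{e2}) using $A_3^+(2,q)=q^2(q+1)$ and $A_1^-(1,q)=1$.

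For parts involving $i=2,4$, I would use (\ref{f2}) and (\ref{g2}) together with Lemma~\ref{H}, which gives $\delta(2,q;\beta)\leq 2q-4$ in odd characteristic. The $+$ case for $i=2$ reduces to $B_2^+(n,q)>q^2-2q-1$, satisfied already by $B_2^+(2,q)=q^2-1$; the $-$ case reduces to $B_2^-(n,q)>(q-1)^2$, satisfied by $B_2^-(3,q)=q(q+1)(q^2-1)$. For $i=4$ the analysis is similar; the $\beta=0$ subcase contributes an extra $(q-1)^3$ term which helps positivity in the $-$ case and is easily dominated by $B_4^+(4,q)=q(q^3-1)(q^2-1)$ in the $+$ case. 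For the $\beta\neq 0$, $-$ subcase of $i=4$, one needs $B_4^-(n,q)>(2q-1)(q-1)$, which holds at $n=3$ since $B_4^-(3,q)=(q-1)^2(q+1)$.

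The main obstacle is the bookkeeping: eight families, two signs each, with subcases depending on whether $\beta^2-1$ is a nonzero square, is $\pm 1$, or is a nonsquare, or whether $\beta=0$. Once one notes that every critical inequality takes the form ``$B_i^{\pm}(n,q)$ dominates a polynomial in $q$ of degree at most three'', the verification collapses to one-line checks at the smallest admissible $n$, with monotonicity finishing the rest.
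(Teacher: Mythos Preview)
Your approach is correct and is exactly the route the paper takes: the paper's own proof reads ``It is tedious to check all the assertions \ldots\ the details are left to the reader'' and singles out only the $i=2$, $n=2$, $+$ case, where it invokes Lemma~\ref{H} to get $N_{DC_2^+(2,q)}(\beta)=q^2(2q-2-\delta(2,q;\beta))>0$. You have simply carried out more of the omitted bookkeeping explicitly, reducing each case to a comparison of $B_i^{\pm}(n,q)$ with a low-degree polynomial in $q$ and checking it at the smallest admissible $n$; the boundary identifications $B_3^+(2,q)=q-1$, $B_1^-(1,q)=q+1$, $A_3^+(2,q)=q^2(q+1)$, $A_1^-(1,q)=1$ and the resulting formulas (\ref{h2}), (\ref{i2}) are all correct.
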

\begin{proof}
It is tedious to check all the assertions in the statements. The
details are left to the reader, except that we make a comment on the
case of (1) with $i=2.$ We see that  $N_{DC_{2}^{+}(n,q)}(\beta)>0$,
for all $n\geq 4$ even and all $q.$ In addition,
\begin{equation*}
N_{DC_{2}^{+}(2,q)}(\beta)=q^2(2q-2-\delta(2,q;\beta))>0, ~ \text
{~in~view~of~(\ref{y1})}.
\end{equation*}
\end{proof}

\section{Construction of codes}
Here we will construct eight infinite families of ternary linear
codes $C(DC_{1}^{+}(n,q))$ of length $N_{1}^{+}(n,q))$,
$C(DC_{2}^{+}(n,q))$ of length $N_{2}^{+}(n,q)$,
$C(DC_{3}^{+}(n,q))$of length $N_{3}^{+}(n,q)$, for $n=2,4,6,\cdots$
and all $q$; $C(DC_{4}^{+}(n,q))$ of length $N_{4}^{+}(n,q)$, for
$n=4,6,8,\cdots $ and all $q$; $C(DC_{1}^{-}(n,q))$ of length
$N_{1}^{-}(n,q)$, for $n=1,3,5,\cdots $ and all $q$;,
$C(DC_{2}^{-}(n,q))$ of length $N_{2}^{-}(n,q)$,
$C(DC_{3}^{-}(n,q))$ of length $N_{3}^{-}(n,q)$,
$C(DC_{4}^{-}(n,q))$ of length $N_{4}^{-}(n,q)$, for $n=3,5,7,\cdots
$ and all $q$, respectively associated with the double cosets
$DC_{1}^{+}(n,q)$, $DC_{2}^{+}(n,q)$, $DC_{3}^{+}(n,q)$,
$DC_{4}^{+}(n,q)$, $DC_{1}^{-}(n,q)$, $DC_{2}^{-}(n,q)$,
$DC_{3}^{-}(n,q)$, $DC_{4}^{-}(n,q)$  (cf. (\ref{g1})-(\ref{n1})).
Let $g_{1},g_{2},\cdots ,g_{N_{i}^{\pm}(n,q)}$ be some fixed
orderings of the elements in $DC_{i}^{\pm}(n,q)$, for $i=1,2,3,4,$
by abuse of notations. Then we put
\begin{equation*}
v_{i}^{\pm}(n,q)=(Trg_{1},Trg_{2},\cdots,Trg_{N_{i}^{\pm}(n,q)})\in
\mathbb {F}_{q}^{N_{i}^{\pm}(n,q)}, ~\text {for}~ i= 1,2,3,4.
\end{equation*}
The ternary codes $C(DC_{1}^{+}(n,q))$, $C(DC_{2}^{+}(n,q))$,
$C(DC_{3}^{+}(n,q))$, $C(DC_{4}^{+}(n,q))$,
$C(DC_{1}^{-}(n,q))$,$C(DC_{2}^{-}(n,q))$, $C(DC_{3}^{-}(n,q))$, and
$C(DC_{4}^{-}(n,q))$ are defined as:
\begin{equation}\label{j2}
C(DC_{i}^{\pm}(n,q))=\{u \in \mathbb
{F}_{q}^{N_{i}^{\pm}(n,q)}|u\cdot v_{i}^{\pm}(n,q)=0\}, ~\text
{for}~ i= 1,2,3,4,
\end{equation}
where the dot denotes respectively the usual inner product in
$\mathbb {F}_{q}^{N_{i}^{\pm}(n,q)}$, for $i=1,2,3,4.$

  The following theorem of  Delsarte  is well-known.
\begin{theorem}\label{L}$($\cite{MS1}$)$
Let  $B$ be a linear code over $\mathbb {F}_{q}$. Then
\begin{equation*}
(B|_{\mathbb {F}_{3}})^{\perp}=tr(B^{\perp}).
\end{equation*}
\end{theorem}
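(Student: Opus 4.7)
The plan is to follow Delsarte's classical proof, which exploits the non-degeneracy of the trace form. Let $N$ denote the common length, and write $b\cdot c=\sum_i b_i c_i$ for the standard $\mathbb{F}_q$-valued inner product on $\mathbb{F}_q^N$. Introduce the $\mathbb{F}_3$-bilinear pairing $\langle b,c\rangle=tr(b\cdot c)$ on $\mathbb{F}_q^N\times\mathbb{F}_q^N$ taking values in $\mathbb{F}_3$, and for an $\mathbb{F}_q$-linear subspace $C\subseteq\mathbb{F}_q^N$ write $C^{\perp'}$ for its annihilator under this trace-form pairing. The first step is to verify $C^{\perp'}=C^{\perp}$. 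The inclusion $C^\perp\subseteq C^{\perp'}$ is immediate. Conversely, if $y\in C^{\perp'}$ and $b\in C$, then $\alpha b\in C$ for every $\alpha\in\mathbb{F}_q$ by $\mathbb{F}_q$-linearity, hence $tr(\alpha(y\cdot b))=0$ for all $\alpha\in\mathbb{F}_q$; the non-degeneracy of $tr:\mathbb{F}_q\to\mathbb{F}_3$ as an $\mathbb{F}_3$-bilinear form $(\alpha,\beta)\mapsto tr(\alpha\beta)$ then forces $y\cdot b=0$.

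Next I would establish the adjunction identity $tr(C)^{\perp}=C^{\perp'}\cap\mathbb{F}_3^N$, where the dual on the left is taken over $\mathbb{F}_3$. For $x\in\mathbb{F}_3^N$ and $y\in\mathbb{F}_q^N$, the $\mathbb{F}_3$-linearity of the trace yields
\begin{equation*}
x\cdot tr(y)=\sum_i x_i\, tr(y_i)=tr\Bigl(\sum_i x_i y_i\Bigr)=tr(x\cdot y)=\langle x,y\rangle.
\end{equation*}
Consequently $x\in tr(C)^\perp$ iff $x\cdot tr(y)=0$ for every $y\in C$, iff $\langle x,y\rangle=0$ for every $y\in C$, iff $x\in C^{\perp'}\cap\mathbb{F}_3^N$.

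Applying these two facts to the $\mathbb{F}_q$-linear code $C=B^\perp$ gives
\begin{equation*}
tr(B^\perp)^\perp=(B^\perp)^{\perp'}\cap\mathbb{F}_3^N=(B^\perp)^\perp\cap\mathbb{F}_3^N=B\cap\mathbb{F}_3^N=B|_{\mathbb{F}_3},
\end{equation*}
using $(B^\perp)^\perp=B$ over $\mathbb{F}_q$. Since $tr(B^\perp)$ is a linear code over $\mathbb{F}_3$, taking the $\mathbb{F}_3$-dual of both sides and invoking double duality over $\mathbb{F}_3$ produces $tr(B^\perp)=(B|_{\mathbb{F}_3})^\perp$, as required.

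The main obstacle is purely organizational: keeping straight three distinct duality operations (the ordinary duals over $\mathbb{F}_q$ and over $\mathbb{F}_3$, plus the auxiliary trace-form annihilator $\perp'$) and verifying that each vector under discussion lies in the correct ambient space. Once the adjunction $tr(C)^\perp=C^{\perp'}\cap\mathbb{F}_3^N$ and the equality $C^{\perp'}=C^\perp$ (for $\mathbb{F}_q$-linear $C$) are both in hand, the remaining argument is a one-line chain of equalities.
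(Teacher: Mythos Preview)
Your proof is correct and is the standard Delsarte argument. The paper does not prove this theorem at all; it simply quotes the result with a reference to MacWilliams--Sloane \cite{MS1}, so there is no proof in the paper to compare against.
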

In view of this theorem, the respective duals of the codes in
(\ref{j2}) are given by:
\begin{equation}\label{k2}
\begin{split}
&C(DC_{i}^{\pm}(n,q))^{\perp}=\{c_{i}^{\pm}(a)=c_{i}^{\pm}(a;n,q)
=(tr(aTrg_1),\cdots,tr(aTrg_{N_{i}^{\pm}(n,q)}))|a \in
\mathbb{F}_{q}\},\\
&\text {for}~ i=1,2,3,4.
\end{split}
\end{equation}
\begin{lemma}\label{M}
Let $\delta(m,q;\beta)$ be as in  (\ref{w1}),  and let $a \in
\mathbb{F}_{q}^{*}$. Then we have
\begin{equation}\label{l2}
\sum_{\beta \in
\mathbb{F}_{q}}\delta(m,q;\beta)\lambda(a\beta)=K(\lambda;a^{2})^m.
\end{equation}
\end{lemma}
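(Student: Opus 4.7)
My plan is to reduce the left-hand side to an $m$-fold product of one-variable exponential sums, then identify each factor as $K(\lambda;a^{2})$ by a simple change of variable. The key observation is that in the definition (\ref{w1}) of $\delta(m,q;\beta)$, the parameter $\beta$ is nothing but the value of $\sum_{i=1}^{m}(\alpha_i+\alpha_i^{-1})$, so summing first over $\beta$ can be absorbed into summing over all tuples in $(\mathbb{F}_q^*)^m$.

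Concretely, I would begin by expanding
\begin{align*}
\sum_{\beta\in\mathbb{F}_q}\delta(m,q;\beta)\lambda(a\beta)
&=\sum_{\beta\in\mathbb{F}_q}\,\sum_{\substack{(\alpha_1,\dots,\alpha_m)\in(\mathbb{F}_q^*)^m\\ \sum_{i}(\alpha_i+\alpha_i^{-1})=\beta}}\lambda(a\beta)\\
&=\sum_{(\alpha_1,\dots,\alpha_m)\in(\mathbb{F}_q^*)^m}\lambda\!\left(a\sum_{i=1}^{m}(\alpha_i+\alpha_i^{-1})\right),
\end{align*}
since for each tuple there is exactly one $\beta$ that makes it contribute, namely $\beta=\sum_i(\alpha_i+\alpha_i^{-1})$. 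The additivity of $\lambda$ then lets me factor the inner expression, so the sum equals $\prod_{i=1}^{m}\sum_{\alpha\in\mathbb{F}_q^*}\lambda(a\alpha+a\alpha^{-1})$.

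Finally, in each single factor I substitute $\alpha=a^{-1}y$ (legitimate since $a\in\mathbb{F}_q^*$ and the map is a bijection of $\mathbb{F}_q^*$). Under this change, $a\alpha=y$ and $a\alpha^{-1}=a^2 y^{-1}$, so
\begin{equation*}
\sum_{\alpha\in\mathbb{F}_q^*}\lambda(a\alpha+a\alpha^{-1})=\sum_{y\in\mathbb{F}_q^*}\lambda(y+a^2 y^{-1})=K(\lambda;a^2).
\end{equation*}
Raising to the $m$-th power gives the claimed identity. There is no real obstacle here; the only thing to be careful about is the bookkeeping in the first step (no overcounting occurs because the condition $\beta=\sum_i(\alpha_i+\alpha_i^{-1})$ uniquely determines $\beta$) and noting that the substitution $\alpha\mapsto a^{-1}\alpha$ converts $K(\lambda;1)$-type sums with twisted argument $a$ into the genuine Kloosterman sum $K(\lambda;a^2)$.
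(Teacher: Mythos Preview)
Your proof is correct and follows essentially the same route as the paper: both arrive at $\sum_{(\alpha_1,\dots,\alpha_m)\in(\mathbb{F}_q^*)^m}\lambda\bigl(a\sum_i(\alpha_i+\alpha_i^{-1})\bigr)$, factor it, and apply the substitution $\alpha\mapsto a^{-1}\alpha$ to recognize each factor as $K(\lambda;a^2)$. The only cosmetic difference is that the paper expresses $\delta(m,q;\beta)$ via character orthogonality (introducing an auxiliary sum over $\alpha\in\mathbb{F}_q$) before collapsing the $\beta$-sum, whereas you interchange the summation order directly---your version is slightly more streamlined.
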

\begin{proof}
The LHS of (\ref{l2}) is equal to
\begin{equation*}
\begin{split}
&\sum_{\beta \in \mathbb{F}_{q}}(q^{-1}\sum_{{x_{1},
\cdots,x_{m}}\in \mathbb{F}_{q}^{*}} \sum_{\alpha \in
\mathbb{F}_{q}}
\lambda(\alpha(x_{1}+\cdots+x_{m}+x_{1}^{-1}+\cdots+x_{m}^{-1}-\beta)))\lambda(a\beta)\\
&=q^{-1}\sum_{{x_{1},\cdots,x_{m}}\in \mathbb{F}_{q}^{*}}
\sum_{\alpha \in \mathbb{F}_{q}}
\lambda(\alpha(x_{1}+\cdots+x_{m}+x_{1}^{-1}+\cdots+x_{m}^{-1}))\sum_{\beta \in \mathbb{F}_{q}}\lambda(\beta(a-\alpha))\\
&=\sum_{{x_{1},\cdots,x_{m}}\in \mathbb{F}_{q}^{*}}
\lambda(a(x_{1}+\cdots+x_{m}+x_{1}^{-1}+\cdots+x_{m}^{-1}))\\
&=\sum_{{x_{1},\cdots,x_{m}}\in \mathbb{F}_{q}^{*}}
\lambda(x_{1}+\cdots+x_{m}+a^2x_{1}^{-1}+\cdots+a^2x_{m}^{-1})\\
&=K(\lambda;a^2)^m.
\end{split}
\end{equation*}
\end{proof}
\begin{theorem}\label{N}
$(1)$ The map $\mathbb{F}_{q}\rightarrow
C(DC_{i}^{+}(n,q))^{\perp}(a\mapsto c_{i}^{+}(a))$ (~for ~$i=1,2,3$)
is an $\mathbb{F}_{3}$-linear isomorphism for $n\geq 2$ even and all
$q$.\\
$(2)$ The map $\mathbb{F}_{q}\rightarrow
C(DC_{4}^{+}(n,q))^{\perp}(a\mapsto c_{4}^{+}(a))$ is an
$\mathbb{F}_{3}$-linear isomorphism for $n\geq 4$ even and all
$q$.\\
$(3)$ The map $\mathbb{F}_{q}\rightarrow
C(DC_{1}^{-}(n,q))^{\perp}(a\mapsto c_{1}^{-}(a))$ is an
$\mathbb{F}_{3}$-linear isomorphism for $n\geq 1$ odd and all $q$.\\
$(4)$ The map $\mathbb{F}_{q}\rightarrow
C(DC_{i}^{-}(n,q))^{\perp}(a\mapsto c_{i}^{-}(a))$ (~for ~$i=2,3,4$)
is an $\mathbb{F}_{3}$-linear isomorphism for $n\geq 3$ odd and all
$q$.\\
\end{theorem}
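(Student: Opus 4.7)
The map $a\mapsto c_{i}^{\pm}(a)$ is manifestly $\mathbb{F}_{3}$-linear (since $tr$ is $\mathbb{F}_{3}$-linear and $a\mapsto a\cdot Tr g$ is $\mathbb{F}_{3}$-linear on $\mathbb{F}_{q}$), and it is surjective onto $C(DC_{i}^{\pm}(n,q))^{\perp}$ by the very description in (\ref{k2}). So this map is an $\mathbb{F}_{3}$-linear isomorphism iff it is injective, i.e., iff the only $a\in\mathbb{F}_{q}$ satisfying $tr(a\cdot Tr g)=0$ for every $g\in DC_{i}^{\pm}(n,q)$ is $a=0$.

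My plan is to split the injectivity into two regimes according to Corollary \ref{K}. In the generic regime, which by Corollary \ref{K} covers every case of the theorem except the two subcases $DC_{3}^{+}(2,q)$ and $DC_{1}^{-}(1,q)$, one has $N_{DC_{i}^{\pm}(n,q)}(\beta)>0$ for every $\beta\in\mathbb{F}_{q}$. Hence $\{Tr g:g\in DC_{i}^{\pm}(n,q)\}=\mathbb{F}_{q}$, and the kernel condition forces $tr(a\beta)=0$ for all $\beta\in\mathbb{F}_{q}$; nondegeneracy of the absolute trace then gives $a=0$.

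The two exceptional subcases I would handle by exponential sums and the Weil bound (\ref{a}). If $a\in\mathbb{F}_{q}^{*}$ lay in the kernel, then $\lambda(a\cdot Tr g)=1$ for every $g$, and summing over $DC_{i}^{\pm}(n,q)$ together with Corollary \ref{E} would give $K(\lambda;a^{2})=-(q+1)$ for $DC_{1}^{-}(1,q)$ and $K(\lambda;a^{2})=q-1$ for $DC_{3}^{+}(2,q)$. The Weil estimate $|K(\lambda;a^{2})|\leq 2\sqrt{q}$ rules out the first at once, since $q+1-2\sqrt{q}=(\sqrt{q}-1)^{2}>0$ for $q\geq 3$; it forces $q-1\leq 2\sqrt{q}$, i.e., $q\leq 3+2\sqrt{2}$, in the second, and since $q$ is a power of $3$ only $q=3$ remains. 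For that $q$, every nonzero $a$ satisfies $a^{2}=1$, and a one-line computation gives $K(\lambda;1)=\lambda(2)+\lambda(1)=-1\neq 2$, a contradiction.

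The real obstacle is just the bookkeeping: one must verify case by case that Corollary \ref{K} does apply in the generic subcases of all eight families, and isolate the two borderline subcases at $(i,n)=(3,2)$ and $(i,n)=(1,1)$. Once those are pinned down, the Weil bound and the single direct computation of $K(\lambda;1)=-1$ over $\mathbb{F}_{3}$ complete the argument.
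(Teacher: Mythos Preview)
Your proposal is correct and follows essentially the same approach as the paper's own proof: the generic cases are handled via Corollary~\ref{K} and nondegeneracy of the trace, and the two borderline subcases $DC_{3}^{+}(2,q)$ and $DC_{1}^{-}(1,q)$ are disposed of by summing $\lambda(a\,Tr g)$ over the double coset and invoking the Weil bound. The only cosmetic difference is in the residual $q=3$ subcase of $DC_{3}^{+}(2,q)$: the paper observes from (\ref{h2}) that $Tr g\in\{\pm 1\}$ there and forces $a=0$ directly, whereas you compute $K(\lambda;1)=-1\neq 2$; both arguments are equally valid.
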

\begin{proof}
All maps are clearly $\mathbb{F}_{3}$-linear and surjective. Let for
t be in the kernel of map $\mathbb{F}_{q}\rightarrow
C(DC_{1}^{+}(n,q))^{\perp}(a\mapsto c_{1}^{+}(a))$. Then
$tr(aTrg)=0$, for all $g \in DC_{1}^{+}(n,q)$. Since, by Corollary
${\ref{K}(1)}$, $Tr:DC_{1}^{+}(n,q) \rightarrow \mathbb{F}_{q}$ is
surjective, and hence $tr(a\alpha)=0$, for all $\alpha \in
\mathbb{F}_{q}$. This implies that $ a=0$, since otherwise
$tr:\mathbb{F}_{q}\rightarrow \mathbb{F}_{3}$ would be the zero map.
This shows $i=1$ case of (1). All the other assertions can be
handled in the same way, except for $i=3$ and $n=2$ case of (1) and
$n=1$ case of (3).

Assume first that we are in the $i=3$ and $n=2$ case  of  (1). Let
$a$ be in the kernel of the map $\mathbb{F}_{q}\rightarrow
C(DC_{3}^{+}(2,q))^{\perp}(a\mapsto c_{3}^{+}(a))$ . Then
$tr(aTrg)=0$, for all $g \in DC_{3}^{+}(2,q).$ Suppose that $a\neq
0$. Then we would have
\begin{equation*}
\begin{split}
q^2(q^2-1)=|DC_{3}^{+}(2,q)|&=\sum_{g \in DC_{3}^{+}(2,q)}e^{2\pi itr(aTrg)/3}\\
&=\sum_{\beta \in \mathbb {F}_{q}}N_{DC_{3}^{+}(2,q)}(\beta)\lambda(a\beta)\\
&=q^2(q+1)\sum_{\beta \in \mathbb {F}_{q}}
\delta(1,q;\beta)\lambda(a\beta)~(cf.(\ref{h2}))\\
&=q^2(q+1)K(\lambda;a^2)~(cf.(\ref{l2})).
\end{split}
\end{equation*}
So, using Weil bound in (\ref{a}), we would get
\begin{equation*}
q-1=K(\lambda;a^2)\leq 2\sqrt{q}.
\end{equation*}
For $q\geq 9$, this is impossible, since $x-1>2\sqrt{x}$, for $x\geq
9$. So the map $\mathbb{F}_{q}\rightarrow
C(DC_{3}^{+}(2,q))^{\perp}(a\mapsto c_{3}^{+}(a))$ is an
$\mathbb{F}_{3}$ -linear isomorphism if $q\geq 9$. This is also true
for $q=3$. Indeed, if $a$ is in the kernel of the map, then
$aTrg=0$, for all $g\in DC_{3}^{+}(2,3)$. Here $Trg=1$ for a half of
elements $g\in DC_{3}^{+}(2,3)$, and $Trg=-1$, for the other half of
elements $g\in DC_{3}^{+}(2,3)$(cf. (\ref{h2})). So $a=0$.
  Assume
next that we are in the $n=1$ case of  (3).  Let $a$ be in the
kernel of the map $\mathbb{F}_{q}\rightarrow
C(DC_{1}^{-}(1,q))^{\perp}(a\mapsto c_{1}^{-}(a))$. Then
$tr(aTrg)=0$, for all $g\in DC_{1}^{-}(1,q)$. Assume that $a\neq 0$.
Then, again using Weil bound, we would have
\begin{equation*}
\begin{split}
q+1=|DC_{1}^{-}(1,q)|&=\sum_{g \in DC_{1}^{-}(1,q)}e^{2\pi itr(aTrg)/3}\\
&=\sum_{\beta \in \mathbb {F}_{q}}N_{DC_{1}^{-}(1,q)}(\beta)\lambda(a\beta)\\
&=\sum_{\beta \in \mathbb {F}_{q}}
(2-\delta(1,q;\beta))\lambda(a\beta)~(cf.(\ref{i2}))\\
&=-\sum_{\beta \in \mathbb {F}_{q}}
\delta(1,q;\beta)\lambda(a\beta)~(\ref{l2})\\
&=-K(\lambda ;a^2)\\
&\leq 2\sqrt{q}.
\end{split}
\end{equation*}
So we would get $q=1$, which is impossible.
\end{proof}

\section{Recursive formulas for power moments of Kloosterman sums}
Here we will be able to find, via Pless power moment identity,
infinite families of recursive formulas generating power moments of
Kloosterman sums with square arguments and even power moments of
those in terms of the frequencies of weights in
$C(DC_{i}^{\pm}(n,q))$, for $i=1,3$ and in $C(DC_{i}^{\pm}(n,q))$,
for $i=2,4,$ respectively.

\begin{theorem}\label{O}$($Pless power moment identity, \cite{MS1}$)$
Let  $B$ be an $q$-$ary$ $[n,k]$code, and let
$B_{i}(resp.~B_{i}^{\perp})$ denote the number of codewords of
weight $i$ in $B$ (resp. in $B_{}^{\perp}$). Then, for
$h=0,1,2,\cdots,$
\begin{equation}\label{m2}
\sum_{j=0}^{n}j^hB_j=\sum_{j=0}^{min\{n,h\}}(-1)^{j}B_{j}^{\perp}
\sum_{t=j}^{h}t!S(h,t)q^{k-t}(q-1)^{t-j}\binom{n-j}{n-t},
\end{equation}
where $S(h,t)$ is the Stirling number of the second kind defined in
(\ref{u}).
\end{theorem}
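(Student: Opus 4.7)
The plan is to derive the identity directly from the MacWilliams transformation, combined with the fact that an arbitrary power can be expanded in falling factorials via Stirling numbers of the second kind. Let $W_B(x,y)=\sum_{j=0}^{n} B_j\, x^{n-j}y^j$ be the homogeneous weight enumerator of $B$. The starting point is MacWilliams' identity
\[
W_B(x,y)=\frac{1}{|B^\perp|}\,W_{B^\perp}\bigl(x+(q-1)y,\,x-y\bigr),
\]
together with $|B^\perp|=q^{n-k}$.

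First I would use the standard expansion $j^h=\sum_{t=0}^{h}t!\,S(h,t)\binom{j}{t}$, so that
\[
\sum_{j=0}^{n}j^h B_j=\sum_{t=0}^{h}t!\,S(h,t)\,M_t,\qquad M_t:=\sum_{j=0}^{n}\binom{j}{t}B_j.
\]
This reduces the problem to evaluating the binomial moment $M_t$ in terms of the dual weight distribution $\{B_j^{\perp}\}$.

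Next I would observe that $M_t$ is exactly the coefficient of $z^t$ in the single-variable generating function $W_B(1,1+z)=\sum_{j}B_j(1+z)^j$. Plugging $x=1$ and $y=1+z$ into MacWilliams gives
\[
W_B(1,1+z)=\frac{1}{|B^\perp|}\,W_{B^\perp}\bigl(q+(q-1)z,\,-z\bigr).
\]
Expanding $(q+(q-1)z)^{n-j}(-z)^j$ by the binomial theorem and collecting the coefficient of $z^t$, after substituting $|B^\perp|=q^{n-k}$ and using the symmetry $\binom{n-j}{t-j}=\binom{n-j}{n-t}$, yields
\[
M_t=q^{k-t}\sum_{j=0}^{\min(n,t)}(-1)^j(q-1)^{t-j}\binom{n-j}{n-t}B_j^{\perp}.
\]

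Finally, substituting this expression back into the Stirling expansion of $\sum_j j^h B_j$ and interchanging the order of summation (noting that the inner sum in $t$ starts at $t=j$ because $\binom{n-j}{n-t}$ vanishes for $t<j$, and the outer range of $j$ truncates at $\min\{n,h\}$) produces exactly (\ref{m2}). The only mildly delicate step is the coefficient extraction from the bivariate expansion, together with the reindexing $s=t-j$ needed to recognize the binomial factor $(q-1)^{t-j}\binom{n-j}{n-t}$; once this Krawtchouk-type inner sum is in hand, the remainder of the argument is clean bookkeeping.
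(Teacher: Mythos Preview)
Your derivation is correct: the expansion $j^h=\sum_{t}t!\,S(h,t)\binom{j}{t}$ together with the MacWilliams identity, evaluated at $x=1$, $y=1+z$, yields exactly the binomial moments $M_t$ in the claimed closed form, and the final interchange of summations is justified as you indicate. There is nothing to compare against, however, since the paper does not prove this statement at all; it is quoted as a classical result from \cite{MS1} and used as a black box in the subsequent arguments.
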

\noindent
\begin{lemma}\label{P}
Let
$c_{i}^{\pm}(a)=(tr(aTrg_1),\cdots,tr(aTrg_{N_{i}^{\pm}(n,q)}))\in
C(DC_{i}^{\pm}(n,q))^{\perp}$, for $a\in \mathbb{F}_{q}^{*}$, and
$i=1,2,3,4.$ Then their Hamming weights are expressed as follows:\\
$(1)$ $\qquad$$w(c_{i}^{\pm}(a))$
\begin{equation}\label{n2}
=\frac{2}{3}A_{i}^{\pm}(n,q)\{B_{i}^{\pm}(n,q)
{\pm}(-1)K(\lambda;a^2) \}, \text {~for~} i=1,3,~\qquad
\end{equation}
$(2)$ $\qquad$ $w(c_{2}^{\pm}(a))$
\begin{equation}\label{o2}
=\frac{2}{3}A_{2}^{\pm}(n,q)\{B_{2}^{\pm}(n,q) {\pm}K(\lambda;a^2)^2
\},\qquad\qquad\qquad\qquad\qquad
\end{equation}
$(3)$ $\qquad$ $w(c_{4}^{\pm}(a))$
\begin{equation}\label{p2}
~~=\frac{2}{3}A_{4}^{\pm}(n,q)\{B_{4}^{\pm}(n,q)
{\pm}(q^2-q+K(\lambda;a^2)^2) \}~(cf.(\ref{c})-(\ref{r})).
\end{equation}
\begin{proof}
\begin{align*}
\begin{split}
w(c_i^{\pm}(a))&=\sum_{j=1}^{N_i^{\pm}(n,q)}(1-{\frac{1}{3}}
\sum_{\alpha\in\mathbb{F}_3}\lambda_0(\alpha tr(aTrg_j)))\\
&=N_i^{\pm}(n,q)-{\frac{1}{3}}\sum_{\alpha\in\mathbb{F}_3}\sum_{w\in
DC_i^{\pm}(n,q)}\lambda(\alpha aTrw)\\
&={\frac{2}{3}}N_i^{\pm}(n,q)-{\frac{1}{3}}\sum_{\alpha\in\mathbb{F}_3^*}
\sum_{w\in DC_i^{\pm}(n,q)}\lambda(\alpha aTrw), \quad \text ~{for}~
i=1,2,3,4.
\end{split}
\end{align*}
Our results now follow from (\ref{o1}) and (\ref{s1})-(\ref{u1}).
\end{proof}
\end{lemma}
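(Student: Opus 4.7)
The plan is to reduce each weight computation to the exponential sum evaluations already recorded in Corollary~\ref{E}, by way of the standard character-theoretic formula for Hamming weight. The starting point is the orthogonality identity that for any $x\in\mathbb{F}_3$ the quantity $1-\tfrac{1}{3}\sum_{\alpha\in\mathbb{F}_3}\lambda_0(\alpha x)$ equals $1$ if $x\ne 0$ and $0$ if $x=0$. Applying this coordinatewise to $c_i^{\pm}(a)=(tr(a\,Trg_1),\ldots,tr(a\,Trg_{N_i^{\pm}(n,q)}))$, summing over $j$, interchanging summation order, and using $\lambda_0(\alpha\, tr(y))=\lambda(\alpha y)$, I arrive at
\begin{equation*}
w(c_i^{\pm}(a))=N_i^{\pm}(n,q)-\tfrac{1}{3}\sum_{\alpha\in\mathbb{F}_3}\sum_{g\in DC_i^{\pm}(n,q)}\lambda(\alpha a\,Trg).
\end{equation*}
Separating off the $\alpha=0$ term, which contributes $N_i^{\pm}(n,q)$, and substituting the identity $N_i^{\pm}(n,q)=A_i^{\pm}(n,q)B_i^{\pm}(n,q)$ from (\ref{o1}) gives
\begin{equation*}
w(c_i^{\pm}(a))=\tfrac{2}{3}A_i^{\pm}(n,q)B_i^{\pm}(n,q)-\tfrac{1}{3}\sum_{\alpha\in\mathbb{F}_3^*}\sum_{g\in DC_i^{\pm}(n,q)}\lambda(\alpha a\,Trg).
\end{equation*}

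The next step is to apply Corollary~\ref{E} with the parameter $a$ there replaced by $\alpha a$. The critical simplification is that $\mathbb{F}_3^*=\{\pm 1\}$, so $\alpha^2=1$ inside $\mathbb{F}_q$ and therefore $(\alpha a)^2=a^2$. Consequently the inner sum is independent of $\alpha\in \mathbb{F}_3^*$, and the outer sum over $\alpha\in\mathbb{F}_3^*$ simply contributes a factor of $2$. By (\ref{s1}) this gives $\pm 2A_i^{\pm}(n,q)K(\lambda;a^2)$ when $i=1,3$; by (\ref{t1}) it gives $\mp\,2A_2^{\pm}(n,q)K(\lambda;a^2)^2$ when $i=2$; and by (\ref{u1}) it gives $\mp\,2A_4^{\pm}(n,q)(K(\lambda;a^2)^2+q^2-q)$ when $i=4$.

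Dividing by $3$ and factoring $\tfrac{2}{3}A_i^{\pm}(n,q)$ out of the two remaining terms in each case produces the three displayed formulas (\ref{n2}), (\ref{o2}), (\ref{p2}) directly. The only genuine point requiring care, and the one I would call the ``main obstacle,'' is sign bookkeeping: the $\pm(-1)$ appearing in (\ref{t1}) and (\ref{u1}) interacts with the $-\tfrac{1}{3}$ in front of the exponential sum, so one must verify that the signs come out so as to reproduce $\pm$ in (\ref{o2}) and (\ref{p2}) and $\pm(-1)$ in (\ref{n2}). Apart from this, the argument is a clean assembly of the orthogonality identity with the evaluations in Corollary~\ref{E}, with no further input required.
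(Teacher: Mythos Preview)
Your proposal is correct and follows essentially the same approach as the paper's proof: both start from the orthogonality identity for $\lambda_0$, reduce to $\tfrac{2}{3}N_i^{\pm}(n,q)-\tfrac{1}{3}\sum_{\alpha\in\mathbb{F}_3^*}\sum_{w}\lambda(\alpha a\,Trw)$, and then invoke (\ref{o1}) and Corollary~\ref{E}. You have in fact written out more detail than the paper does---in particular the observation that $(\alpha a)^2=a^2$ for $\alpha\in\mathbb{F}_3^*$, which is the reason the inner sum is independent of $\alpha$---but the route is the same.
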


Let $u=(u_1,\cdots,u_{N_{i}^{\pm}(n,q)})\in \mathbb
{F}_{3}^{N_{i}^{\pm}(n,q)}$, for $i=1,2,3,4,$ with $\nu_{\beta}$ 1's
and $\mu_{\beta}$ 2's in the coordinate places where
$Tr(g_j)=\beta$, for each $\beta \in \mathbb {F}_{q}$. Then, from
the definition of the codes $C(DC_{i}^{\pm}(n,q))$(cf. (\ref{j2})),
we see that $u$ is a codeword with weight $j$ if and only if
$\sum_{\beta\in\mathbb{F}_q} \nu_\beta
+\sum_{\beta\in\mathbb{F}_q}\mu_\beta=j\quad and \quad
\sum_{\beta\in\mathbb{F}_q} \nu_\beta
\beta=\sum_{\beta\in\mathbb{F}_q}\mu_\beta \beta$ (an identity in
$\mathbb{F}_q$). As there are
$\prod_{\beta\in\mathbb{F}_q}\binom{N_{DC_{i}^{\pm}(n,q)}(\beta)}{\nu_{\beta},\mu_{\beta}}$
many such codewords with weight $j$, we obtain the following result.
\begin{proposition}\label{Q}
Let $\{C_{i,j}^{\pm}(n,q)\}_{j=0}^{N_{i}^{\pm}(n,q)}$ be the weight
distribution of $C(DC_{i}^{\pm}(n,q))$, for $i=1,2,3,4.$ Then we
have
\begin{equation}\label{q2}
C_{i,j}^{\pm}(n,q)=\sum\prod_{\beta \in \mathbb {F}_{q}}
\binom{N_{DC_{i}^{\pm}(n,q)}(\beta)}{\nu_{\beta},\mu_{\beta}}, \text
{~for~} 0\leq j \leq N_{i}^{\pm}(n,q), \text {~and~} i= 1,2,3,4,
\end{equation}
where the sum is over all the sets of nonnegative integers
$\{\nu_{\beta}\}_{\beta \in \mathbb {F}_{q}}$ and
$\{\mu_{\beta}\}_{\beta \in \mathbb {F}_{q}}$ satisfying
\begin{equation*}
\sum_{\beta\in\mathbb{F}_q} \nu_\beta
+\sum_{\beta\in\mathbb{F}_q}\mu_\beta=j,\quad \text {and} \quad
\sum_{\beta\in\mathbb{F}_q} \nu_\beta
\beta=\sum_{\beta\in\mathbb{F}_q}\mu_\beta \beta.
\end{equation*}
The formulas appearing in the next theorem and stated in (\ref {t}),
(\ref {w}), and (\ref {y}) follow  by applying the formula in (\ref
{q2}) to each $C(DC_{i}^{\pm}(n,q))$, using the explicit values of
$N_{DC_{i}^{\pm}(n,q)}(\beta)$ in (\ref {e2})-(\ref {g2}).
\end{proposition}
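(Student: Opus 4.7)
The plan is to enumerate ternary codewords of a prescribed weight by bookkeeping, inside each fiber of the trace map, how many coordinate positions receive the symbol $1$ and how many receive $2$. Concretely, I fix $i\in\{1,2,3,4\}$ and partition the coordinate index set $\{1,\dots,N_i^{\pm}(n,q)\}$ according to the value $Tr(g_j)\in\mathbb{F}_q$ attached to each generator. By the very definition of $N_{DC_i^{\pm}(n,q)}(\beta)$, the fiber sitting above $\beta$ has cardinality $N_{DC_i^{\pm}(n,q)}(\beta)$, and these fiber sizes sum to $N_i^{\pm}(n,q)$.

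Given any word $u \in \mathbb{F}_3^{N_i^{\pm}(n,q)}$, I would introduce the two-variable statistics
\[
\nu_\beta = |\{j : u_j = 1,\; Tr(g_j) = \beta\}|, \qquad \mu_\beta = |\{j : u_j = 2,\; Tr(g_j) = \beta\}|.
\]
Since every nonzero entry of $u$ lies in $\{1,2\}$, the Hamming weight of $u$ equals $\sum_\beta(\nu_\beta+\mu_\beta)$, which accounts for the first constraint $\sum_\beta \nu_\beta + \sum_\beta \mu_\beta = j$. The defining relation $u\cdot v_i^{\pm}(n,q)=0$ of the code, interpreted via $\mathbb{F}_3 \hookrightarrow \mathbb{F}_q$, reads $\sum_\beta \nu_\beta\,\beta + 2\sum_\beta \mu_\beta\,\beta = 0$ in $\mathbb{F}_q$; using $2 \equiv -1 \pmod 3$, this rearranges to the balancing identity $\sum_\beta \nu_\beta\,\beta = \sum_\beta \mu_\beta\,\beta$ appearing as the second constraint.

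With the admissible tuples $(\{\nu_\beta\},\{\mu_\beta\})$ thus characterized, the remaining step is a fiberwise multinomial count: within the fiber over each $\beta$, a codeword of the specified shape is pinned down by choosing which $\nu_\beta$ of the $N_{DC_i^{\pm}(n,q)}(\beta)$ positions receive the symbol $1$ and which $\mu_\beta$ receive the symbol $2$, while the remaining positions are forced to $0$. The number of such choices is $\binom{N_{DC_i^{\pm}(n,q)}(\beta)}{\nu_\beta,\mu_\beta}$; taking the product over $\beta$ and then summing over all admissible tuples produces the formula (\ref{q2}).

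There is no substantive obstacle here; the proposition is a bookkeeping exercise and the counts match by construction. The one subtlety worth flagging is that the orthogonality relation must be interpreted in $\mathbb{F}_q$ rather than $\mathbb{F}_3$, and one invokes $2 \equiv -1 \pmod{3}$ to collapse the two separate $\mathbb{F}_q$-contributions (one coming from the $1$-positions, one from the $2$-positions) into the symmetric identity $\sum_\beta\nu_\beta\beta=\sum_\beta\mu_\beta\beta$. Once (\ref{q2}) is in place, the concrete formulas (\ref{t}), (\ref{w}), and (\ref{y}) advertised in Theorem \ref{A} are obtained by plugging in the explicit values of $N_{DC_i^{\pm}(n,q)}(\beta)$ according to the case split (\ref{e2})--(\ref{g2}) of Proposition \ref{J}.
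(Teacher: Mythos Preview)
Your proof is correct and follows essentially the same approach as the paper: partition the coordinates by the value of $Tr(g_j)$, record the number of $1$'s and $2$'s in each fiber via $\nu_\beta,\mu_\beta$, translate the code condition $u\cdot v_i^{\pm}(n,q)=0$ (an identity in $\mathbb{F}_q$) into $\sum_\beta \nu_\beta\beta=\sum_\beta\mu_\beta\beta$ using $2\equiv -1\pmod 3$, and count via the multinomial coefficients. Your write-up is in fact slightly more explicit than the paper's about why the orthogonality relation takes the symmetric form, but the argument is the same.
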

\begin{theorem}\label{R}
Let $\{C_{i,j}^{\pm}(n,q)\}_{j=0}^{N_{i}^{\pm}(n,q)}$ be the weight
distribution of $C(DC_{i}^{\pm}(n,q))$, for $i=1,2,3,4.$ Then we
have
\item \label{Ra}
$(1)$ For $i=1,3,$ and $j=0,\cdots, N_{i}^{\pm}(n,q)$,
\begin{equation*}
\begin{split}
C_{i,j}^{\pm}(n,q)=&\sum{\binom{q^{-1}A_{i}^{\pm}(n,q)(B_{i}^{\pm}(n,q)
{\pm}1)}{\nu_1,\mu_1}}{\binom{q^{-1}A_{i}^{\pm}(n,q)(B_{i}^{\pm}(n,q)
{\pm}1)}{\nu_{-1},\mu_{-1}}}\\
&\times \prod_{\beta^2-1\neq
0~square}{\binom{q^{-1}A_{i}^{\pm}(n,q)(B_{i}^{\pm}(n,q)
{\pm}(q+1))}{\nu_\beta,\mu_\beta}}\\
&\times
\prod_{\beta^2-1~nonsquare}{\binom{q^{-1}A_{i}^{\pm}(n,q)(B_{i}^{\pm}(n,q)
{\pm}(-q+1))}{\nu_\beta,\mu_\beta}},
\end{split}
\end{equation*}
where the sum is over all the sets of nonnegative integers
$\{\nu_{\beta}\}_{\beta \in \mathbb {F}_{q}}$ and
$\{\mu_{\beta}\}_{\beta \in \mathbb {F}_{q}}$ satisfying
\begin{equation*}
\sum_{\beta\in\mathbb{F}_q} \nu_\beta
+\sum_{\beta\in\mathbb{F}_q}\mu_\beta=j,\quad \text {and} \quad
\sum_{\beta\in\mathbb{F}_q} \nu_\beta
\beta=\sum_{\beta\in\mathbb{F}_q}\mu_\beta \beta.
\end{equation*}
\item \label{Rb}
$(2)$ For $j=0,\cdots, N_{2}^{\pm}(n,q)$,
\begin{equation*}
C_{2,j}^{\pm}(n,q)=\sum\prod_{\beta\in
\mathbb{F}_{q}}{\binom{q^{-1}A_{2}^{\pm}(n,q)(B_{2}^{\pm}(n,q)
{\pm}((q-1)^2-q\delta(2,q;\beta)))}{\nu_\beta,\mu_\beta}},
\end{equation*}
where the sum is over all the sets of nonnegative integers
$\{\nu_{\beta}\}_{\beta \in \mathbb {F}_{q}}$ and
$\{\mu_{\beta}\}_{\beta \in \mathbb {F}_{q}}$ satisfying
\begin{equation*}
\sum_{\beta\in\mathbb{F}_q} \nu_\beta
+\sum_{\beta\in\mathbb{F}_q}\mu_\beta=j,\quad \text {and} \quad
\sum_{\beta\in\mathbb{F}_q} \nu_\beta
\beta=\sum_{\beta\in\mathbb{F}_q}\mu_\beta \beta.
\end{equation*}
\item \label{Rc}
$(3)$ For  $j=0,\cdots, N_{4}^{\pm}(n,q)$,
\begin{equation*}
\begin{split}
C_{4,j}^{\pm}(n,q)=&\sum{\binom{q^{-1}A_{4}^{\pm}(n,q)(B_{4}^{\pm}(n,q)
{\pm}(-1)(q\delta(2,q;\beta)+(q-1)^3))}{\nu_0,\mu_0}}\\
&\times \prod_{\beta \neq
0}{\binom{q^{-1}A_{4}^{\pm}(n,q)(B_{4}^{\pm}(n,q)
{\pm}(-1)(q\delta(2,q;\beta)-2q^2+3q-1))}{\nu_\beta,\mu_\beta}},\\
\end{split}
\end{equation*}
where the sum is over all the sets of nonnegative integers
$\{\nu_{\beta}\}_{\beta \in \mathbb {F}_{q}}$ and
$\{\mu_{\beta}\}_{\beta \in \mathbb {F}_{q}}$ satisfying
\begin{equation*}
\sum_{\beta\in\mathbb{F}_q} \nu_\beta
+\sum_{\beta\in\mathbb{F}_q}\mu_\beta=j,\quad \text {and} \quad
\sum_{\beta\in\mathbb{F}_q} \nu_\beta
\beta=\sum_{\beta\in\mathbb{F}_q}\mu_\beta \beta.
\end{equation*}
\end{theorem}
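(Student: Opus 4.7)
The plan is essentially a direct substitution argument: Proposition \ref{Q} already expresses each weight $C_{i,j}^{\pm}(n,q)$ as a sum of products of multinomial coefficients
\[
\prod_{\beta\in\mathbb{F}_q}\binom{N_{DC_i^{\pm}(n,q)}(\beta)}{\nu_\beta,\mu_\beta},
\]
indexed by the sets $\{\nu_\beta\},\{\mu_\beta\}$ satisfying the two linear conditions $\sum\nu_\beta+\sum\mu_\beta=j$ and $\sum\nu_\beta\beta=\sum\mu_\beta\beta$. So the only thing left to do is to plug in, for each family of double cosets, the values of $N_{DC_i^{\pm}(n,q)}(\beta)$ furnished by Proposition \ref{J} (formulas (\ref{e2}), (\ref{f2}), (\ref{g2})).

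First I would handle the cases $i=1,3$. Here Proposition \ref{J}(1) gives a single formula for $N_{DC_i^{\pm}(n,q)}(\beta)$ but, via the expression of $\delta(1,q;\beta)$ recalled in Remark \ref{G} (equation (\ref{x1})), this formula splits into three sub-cases according as $\beta=\pm1$, $\beta^2-1$ is a nonzero square, or $\beta^2-1$ is a nonsquare, producing respectively the values $q^{-1}A_i^{\pm}(n,q)(B_i^{\pm}(n,q)\pm1)$, $q^{-1}A_i^{\pm}(n,q)(B_i^{\pm}(n,q)\pm(q+1))$, and $q^{-1}A_i^{\pm}(n,q)(B_i^{\pm}(n,q)\pm(-q+1))$. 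Grouping the product in Proposition \ref{Q} according to these three classes of $\beta$ yields the formula stated in part (\ref{Ra}) of the theorem.

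For $i=2$, no sub-division of the sum is needed: Proposition \ref{J}(2) already gives a single closed expression $N_{DC_2^{\pm}(n,q)}(\beta)=q^{-1}A_2^{\pm}(n,q)(B_2^{\pm}(n,q)\pm((q-1)^2-q\delta(2,q;\beta)))$ valid for every $\beta\in\mathbb{F}_q$, and substituting this into (\ref{q2}) gives part (\ref{Rb}) verbatim. For $i=4$, Proposition \ref{J}(3) distinguishes the cases $\beta=0$ and $\beta\neq0$; I would separate the $\beta=0$ factor from the product over $\beta\neq0$ to obtain part (\ref{Rc}).

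There is no real obstacle: each step is a mechanical substitution, and the combinatorial identity underlying (\ref{q2}) was already established in the discussion preceding Proposition \ref{Q} (counting codewords $u\in\mathbb{F}_3^{N_i^{\pm}(n,q)}$ with prescribed numbers of $1$'s and $2$'s in the coordinates where $\mathrm{Tr}(g_j)=\beta$). The only point requiring a little care is the case analysis for $i=1,3$, where the four possibilities $\beta=1$, $\beta=-1$, $\beta^2-1$ a nonzero square, $\beta^2-1$ a nonsquare must be consolidated correctly into the two binomials with parameter $B_i^{\pm}(n,q)\pm1$ and the two products over the remaining quadratic-character classes displayed in (\ref{t}).
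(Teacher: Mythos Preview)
Your proposal is correct and follows exactly the same approach as the paper: the paper states that the formulas in Theorem~\ref{R} follow by applying the general formula (\ref{q2}) of Proposition~\ref{Q} to each code $C(DC_i^{\pm}(n,q))$, substituting the explicit values of $N_{DC_i^{\pm}(n,q)}(\beta)$ from Proposition~\ref{J} (equations (\ref{e2})--(\ref{g2})). Your case-by-case description of this substitution, including the splitting for $i=1,3$ via (\ref{x1}) and the separation of $\beta=0$ for $i=4$, matches the paper's argument precisely.
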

Now, we apply the Pless power moment identity in (\ref {m2}) to
$C(DC_{i}^{\pm}(n,q))$, for $i=1,2,3,4,$ in order to get the results
in Theorem 1(cf. (\ref{s}), (\ref{t}), (\ref{v})-(\ref{y})) about
recursive formulas.

The left hand side of that identity in (\ref{m2}) is equal to
\begin{equation*}
\sum_{a\in \mathbb {F}_{q}^{*}}w(c_{i}^{\pm}(a))^{h},
\end{equation*}
with $w(c_{i}^{\pm}(a))$ given by (\ref{n2})-(\ref{p2}). We have,
for $i=1,3,$
\begin{equation}\label{r2}
\begin{split}
\sum_{a\in \mathbb {F}_{q}^{*}}w(c_{i}^{\pm}(a))^{h}
&=({\frac{2}{3}})^hA_{i}^{\pm}(n,q)^{h}\sum_{a\in \mathbb
{F}_{q}^{*}}\{B_{i}^{\pm}(n,q){\pm}(-1)K(\lambda;a^2)\}^h \\
&=2({\frac{2}{3}})^hA_{i}^{\pm}(n,q)^{h}\sum_{l=0}^{h}({\pm}(-1))^{l}\binom{h}{l}B_{i}^{\pm}(n,q)^{h-l}SK^{l}.
\end{split}
\end{equation}
Similarly, we have
\begin{equation}\label{s2}
\sum_{a\in \mathbb {F}_{q}^{*}}w(c_{2}^{\pm}(a))^{h}=
2({\frac{2}{3}})^hA_{2}^{\pm}(n,q)^{h}\sum_{l=0}^{h}(({\pm}1))^{l}\binom{h}{l}B_{2}^{\pm}(n,q)^{h-l}SK^{2l},
\end{equation}
\begin{equation}\label{t2}
\sum_{a\in \mathbb {F}_{q}^{*}}w(c_{4}^{\pm}(a))^{h}=
2({\frac{2}{3}})^hA_{4}^{\pm}(n,q)^{h}\sum_{l=0}^{h}({\pm}1)^{l}\binom{h}{l}\{B_{4}^{\pm}(n,q){\pm}(q^2-q)\}^{h-l}SK^{2l}.
\end{equation}
Here one has to separate the term corresponding to $l=h$ in
(\ref{r2})-(\ref{t2}), and notes $dim_{\mathbb
{F}_{3}}C(DC_{i}^{\pm}(n,q))^{\perp}=r$

\bibliographystyle{amsplain}

\end{document}